      \newtheorem{theorem}{Theorem}[section]
      \newtheorem{lemma}[theorem]{Lemma}
      \newtheorem{corollary}[theorem]{Corollary}
      \theoremstyle{definition}
      \theoremstyle{remark}
      \newtheorem{remark}[theorem]{Remark}
      \newcommand{\R}{{\mathbb R}}
      \newcommand{\<}{\langle}
      \newcommand{\Y}{\rangle}
      \newcommand{\z}{\left}
      \newcommand{\y}{\right}
      \newcommand{\E}{\mathbf{E}}
      \newcommand{\Pro}{\mathbf{P}}
      \newcommand{\all}{\mathbf{E}\int_0^T dt}
      \def\@setcopyright{}
      \def\serieslogo@{}
      \newcommand{\Rmnum}[1]{\expandafter\@slowromancap\romannumeral #1@}
\begin{document}
\title{Compact support property of SuperBrownian  Motion in random environments}
\author{Guohuan Zhao}
\maketitle 
\begin{abstract}
In this paper, we prove the compact support property for a class of nonlinear SPDE including the equation that the density of one-dimensional Super-Brownian Motion in random environment satisfies.
\end{abstract}
\section{Introduction}
It's well know that the one-dimensional density of classic Super-Brownian  Motion satisfies the following SPDE, 
\begin{equation}\label{eq1}\partial_t u(t,x)=\Delta u(t,x)+\sqrt{u(t, x)}\dot{B}(t,x)\end{equation}
 Here we replace $\frac{1}{2}\Delta$ with $\Delta$ for simple and $\dot{B}(t,x)$ is time-space white noise. The above equation can be written as following form: 
$$\partial_t u(t,x)=\Delta u(t,x)+\sqrt{u(t,x)}\varphi_k(x) \dot{\beta}^k_t$$
where $\{\varphi_k\}$ is an orthlnormal basis of $L^2(\R)$ and $\{\beta_t^k\}$ is a sequence of independent Brownian Motions. The solutions to this equation has \textbf{compact support property}, roughly speaking, if the initial date $u(0,x)$ have compact support, then for all $t>0$, $u(t,\cdot)$ has compact support almost surely. In \cite{mueller1992compact}, the authors proved the compact support property for the solutions of a large class of SPDEs including \eqref{eq1}. Later Krylov given a simpler proof in \cite{krylov1997result} by using his $L^p$ theory.  

One the other hand, from 1990's many experts started to study superprocesses in random environments. In \cite{mytnik1996superprocesses}, Mytnik introduced models of  superprocesses in random environments. We give a brief description below: 

Let $\{\xi_k(x), k\in\mathbb{N}\}$ be a sequence of independent identically distributed random fields on $\R^d$ satisfying:
\begin{equation}
  \E\xi_k(x)=0,\quad \E\xi_k(x)\xi_k(y)=g(x,y), \quad\sup_{x}\E|\xi_k(x)|^3<\infty, \quad x\in \mathbb{R}^d,k\in\mathbb{N}. 
\end{equation}
Where $g$ is the covariance function which satisfies $\sup_{x,y\in \R}|g(x,y)|\leq C<\infty$, $g(x,\cdot)\in C_0(\R^d)$ $(\forall x\in\R^d)$, we further assume $g\in C^2_b$ in this paper.
$\{\xi_k(x), k\in\mathbb{N}\}$ serves as the random environments. For each fixed $n\in \mathbb{N}$, consider a particle system in which there are $K_n\ge 1$ particles located in $\R^d$, each of them moves independent  as a copy of Brownian motion(with generator $\Delta$) until time $t=1/n$. Given $\{\xi_k(x), k\in\mathbb{N}\}$, at time $\frac{1}{n}$, each particle split into two particles with probability $\frac{1}{2}+\frac{1}{2\sqrt{n}}[(-\sqrt{n})\vee \xi_1(x)\wedge(\sqrt{n})]$ or dies with probability $\frac{1}{2}-\frac{1}{2\sqrt{n}}[(-\sqrt{n})\vee \xi_1(x)\wedge(\sqrt{n})]$. The new particles then moves in space independently as  Brownian motions(with generator $\Delta$)  starting at their place of birth, 
during the time interval $[1/n, 2/n)$. In general,  at time $\frac{i}{n}$, each surviving particle split into two particles with probability $\frac{1}{2}+\frac{1}{2\sqrt{n}}[(-\sqrt{n})\vee \xi_i(x)\wedge(\sqrt{n})]$ or dies with probability $\frac{1}{2}-\frac{1}{2\sqrt{n}}[(-\sqrt{n})\vee \xi_i(x)\wedge(\sqrt{n})]$, and in the time interval $[i/n, (i+1)/n]$ particles independently according to Brownian motions(with generator $\Delta$).
 Let $X^n_t$ be the measure-valued Markov process, defined as
$$X^n_t(B)=\frac{\mbox{number of particles in }B\mbox{ at time }t}{n}.$$
where $B\in{\mathcal B}(\R^d)$ are Borel sets in $\R^d$.
Let $C^k_b(\R^d)$ (respectively $C^\infty_b(\R^d))$ denote the collection of all bounded continuous functions on $\R^d$ with bounded continuous derivatives up to order $k$ (respectively with bounded derivatives of all orders). For all bounded measurable $f$, let $\mu(f)=\langle f,\mu\rangle$ denote the integral of
$f$ with respect to the measure $\mu$ on $\R^d$. For any measurable functions $f,h$ on $\R^d$, let $f\otimes h$ denote an $\R^{2d}$ valued function defined by  $(f\otimes h)(x,y):=f(x)h(y)$, $x,y\in\R^d$.
It was proved in \cite{mytnik2007local} that if $X^n_0$ converge weakly to a finite measure $\mu$
 on $\R^d$, then the processes $X^n=\{X^n_t, t\ge 0\}$ converges weakly to a  measure-valued process
 $X=\{X_t,t\ge 0\}$, where $X$ is the unique solution to the following martingale problem:

\begin{equation}\label{MP}
\mbox{(MP)}:\left\{\begin{array}{rl}&\displaystyle\mbox{for all }f\in C^2_b,\quad M_t^f=\langle f, X_t\rangle-\langle f, \mu\rangle-\int_0^t\langle \Delta f,  X_s\rangle\\
&\displaystyle\mbox{ is a continuous square-integrable martingale with quadratic variation }\\
&\displaystyle\langle M^f \rangle_t= \int_0^t \langle f^2, X_s\rangle ds+ \int_0^t \langle g\cdot f\otimes f, X_s\otimes X_s\rangle ds.
\end{array}
\right.\\
\end{equation}

However, there only very few works about the properties of these processes. As far as to my knowledge, the most interesting work is \cite{mytnik2007local}, in which the authors studies the local extinction property. Under some assumption, it shows the super-Brownian motion in random environments will extinct locally in any dimension which is quite different from the classic case(see Theorem 1.1, 1.2 of \cite{mytnik2007local} for more details). 

In this paper, we show the density of one-dimensional super-Brownian motion in random environments satisfies: 
$$\partial_tu=\Delta u+\sqrt{u}\varphi_k \dot{\beta}^k_t+uh_k\dot{w}_t^k. $$
Here $\{w_t^k\}$ and $\{\beta_t^k\}$ are two sequences of independent standard Brownian motions. In order to do that we give the second order moment formula of superprocesses in Section 2. In Section 3, using Krylov's $L^p$ theory for SPDEs (see \cite{krylov1999analytic}), we prove the compact support property for more general stochastic partial differential equations under some reasonable assumptions. 

\section{Moment Formula}
In this section, we give the first and second order moment formula for $X_t$. 

The following Lemma is a simple application of Stone-Weierstrass theorem. 
\begin{lemma}\label{apx1}
For any $f(x,y)\in C_b(\R^{d_1+d_2})\,\,(x\in \R^{d_1}, y\in \R^{d_2})$, there exist a sequence of smooth functions $\{f_n(x,y)\}$ with form $f_n(x,y)=\sum_{i=1}^{n}\phi^n_i(x)\psi^n_i(y)\}$ such that $f_n\rightarrow f $ uniformly on any compact subset in $\R^{d_1+d_2}$ and $\|f_n\|_\infty\leq \|f\|_\infty$. 
\end{lemma}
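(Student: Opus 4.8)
The plan is to approximate $f$ by trigonometric polynomials. Such a polynomial is automatically $C^\infty$, and once each exponential $e^{\mathrm{i}(k\cdot x+l\cdot y)}$ is split as $e^{\mathrm{i}k\cdot x}e^{\mathrm{i}l\cdot y}$ it is automatically a finite sum of products of a function of $x\in\R^{d_1}$ and a function of $y\in\R^{d_2}$, i.e.\ of the form displayed in the statement (the index $n$ in the sum should be read as ``finitely many terms''; one may re-index the sequence afterwards). Exhaust $\R^{d_1+d_2}$ by the balls $B_n=\{(x,y):|x|^2+|y|^2\le n^2\}$. It then suffices to produce, for each $n$, one such polynomial $f_n$ with $\|f_n\|_\infty\le\|f\|_\infty$ and $\sup_{B_n}|f_n-f|\le 1/n$, because every compact $K$ lies in $B_n$ for all large $n$, whence $\sup_K|f_n-f|\to 0$.

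For fixed $n$ I would first localize: pick $R_n>n+1$ and a continuous cutoff $\chi_n:\R^{d_1+d_2}\to[0,1]$ equal to $1$ on $B_n$ and supported in the open cube $Q_n=(-R_n,R_n)^{d_1+d_2}$, and set $h_n=\chi_n f$. Then $h_n$ is continuous, agrees with $f$ on $B_n$, satisfies $\|h_n\|_\infty\le\|f\|_\infty$, and, vanishing near $\partial Q_n$, extends to a continuous function on the torus $\mathbb{T}_n=\R^{d_1+d_2}/(2R_n\mathbb{Z})^{d_1+d_2}$. I would then take $f_n=\sigma_{N_n}[h_n]$, the $N_n$-th Ces\`aro (Fej\'er) mean of the multidimensional Fourier series of $h_n$ on $\mathbb{T}_n$, choosing $N_n$ so large that $\|\sigma_{N_n}[h_n]-h_n\|_{L^\infty(\mathbb{T}_n)}\le 1/n$; this is possible since the Fej\'er means of a continuous function converge to it uniformly. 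Regarded as a periodic function on $\R^{d_1+d_2}$, $f_n$ is a trigonometric polynomial that equals $f$ up to error $1/n$ on $B_n$.

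The one delicate point --- and the reason a naive approximation by polynomials or by Fourier partial sums fails --- is the global bound $\|f_n\|_\infty\le\|f\|_\infty$. Here I would use that the multidimensional Fej\'er kernel, being a product of one-dimensional Fej\'er kernels, is nonnegative with integral $1$, so $\sigma_{N_n}$ is an averaging operator and $\|f_n\|_{L^\infty(\mathbb{T}_n)}\le\|h_n\|_{L^\infty(\mathbb{T}_n)}\le\|f\|_\infty$; since $f_n$ is periodic, its supremum over $\R^{d_1+d_2}$ is attained over a single period $Q_n$, giving $\|f_n\|_\infty\le\|f\|_\infty$. Finally I would record the routine check that passing to real parts (legitimate since $h_n$ is real) does not spoil the tensor structure: applying $\operatorname{Re}(ab)=\operatorname{Re}a\,\operatorname{Re}b-\operatorname{Im}a\,\operatorname{Im}b$ with $a=c_{k,l}e^{\mathrm{i}k\cdot x}$ and $b=e^{\mathrm{i}l\cdot y}$, every summand of $f_n$ stays a product of a smooth function of $x\in\R^{d_1}$ with a smooth function of $y\in\R^{d_2}$. (Without the sup-norm requirement, and on a fixed box, the statement is exactly Stone--Weierstrass as the paper says; the extra content is that Fej\'er summation keeps the approximants globally bounded by $\|f\|_\infty$.)
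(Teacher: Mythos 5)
Your argument is correct, and it fills a gap the paper glosses over. The paper dismisses Lemma~\ref{apx1} as ``a simple application of Stone--Weierstrass'' and gives no proof; but as you rightly observe, Stone--Weierstrass by itself only yields uniform approximation on a prescribed compact set and says nothing about the global constraint $\|f_n\|_\infty\le\|f\|_\infty$ --- indeed a nonconstant polynomial is unbounded, so some extra device is mandatory. Your route, localising $f$ to a large torus with a continuous cutoff and then taking Fej\'er means, is a clean way to get that device: the Fej\'er kernel on $\mathbb{T}^{d_1+d_2}$ factors into a product of nonnegative one-dimensional kernels of total mass one, so $\sigma_N$ is a positive averaging operator and hence an $L^\infty$-contraction, giving $\|f_n\|_\infty\le\|h_n\|_\infty\le\|f\|_\infty$ for free; simultaneously each $f_n$, being a trigonometric polynomial, is smooth and manifestly of tensor-product form once one splits $e^{\mathrm{i}(k\cdot x+l\cdot y)}=e^{\mathrm{i}k\cdot x}e^{\mathrm{i}l\cdot y}$, and the realification you record preserves this structure. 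An alternative, closer in spirit to the paper's one-line hint, would be to apply Stone--Weierstrass on a ball $B_n$, multiply by a smooth tensor cutoff $\chi_1(x)\chi_2(y)$ supported in $B_n$ and equal to $1$ on $B_{n-1}$, and then rescale by $\|f\|_\infty/(\|f\|_\infty+\varepsilon_n)$ to restore the sup-norm bound; both work, but your Fej\'er argument obtains the contractivity structurally rather than by a correction factor, which is tidier. One minor stylistic note: the notation $f_n=\sum_{i=1}^n\phi_i^n\psi_i^n$ forces the number of summands to equal the index $n$, but (as you note parenthetically) this is cosmetic and handled by re-indexing, so no mathematical content is lost.
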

In order to get the second moment formula, we need the following generalization of Lemma \ref{apx1}. 

\begin{lemma}\label{apx2}
For any $f(x,y)\in C^{k_1,k_2}_b(\R^{d_1+d_2})~~(x\in \R^{d_1}, y\in \R^{d_2})$, there exist a sequence of smooth function $\{f_n(x,y)\}$ with form $f_n(x,y)=\sum_{i=1}^{n}\phi^n_i(x)\psi^n_i(y)\}$ such that for any $\alpha=(\alpha_1,\cdots,\alpha_d)\in \mathbb{N}^{d_1}$, $\beta=(\beta_1,\cdots,\beta_d)\in \mathbb{N}^{d_2}$ and $|\alpha|=\alpha_1+\cdots+\alpha_d\leq k_1$, $|\beta|=\beta_1+\cdots+\beta_d\leq k_2$. $\partial_x^\alpha\partial_y^\beta f_n\rightarrow \partial_x^\alpha\partial_y^\beta f$ uniformly on any compact subset and $\|f_n\|_{C^{k_1, k_2}}\leq C\|f\|_{C^{k_1,k_2}}$. 
\end{lemma}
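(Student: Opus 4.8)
The plan is to reduce to Lemma \ref{apx1} by a mollification-plus-truncation argument, doing the smoothing in a product-compatible way so that the tensor structure $f_n=\sum_i\phi^n_i\otimes\psi^n_i$ is preserved throughout. First I would dispose of the derivative estimates cheaply: given $f\in C^{k_1,k_2}_b$, consider the finite collection of functions $\{\partial_x^\alpha\partial_y^\beta f:|\alpha|\le k_1,|\beta|\le k_2\}$, all of which lie in $C_b(\R^{d_1+d_2})$. The point is that one cannot simply apply Lemma \ref{apx1} to each derivative separately, since the resulting tensor approximants need not be consistent (i.e., the approximant of $\partial_x f$ need not be the $x$-derivative of the approximant of $f$). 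So instead I would apply Lemma \ref{apx1} only to $f$ itself, obtaining $g_m=\sum_i\phi^m_i\otimes\psi^m_i$ with $g_m\to f$ uniformly on compacts and $\|g_m\|_\infty\le\|f\|_\infty$, and then \emph{separately} convolve: set
\[
f_{m,\varepsilon}(x,y)=(g_m*\rho_\varepsilon)(x,y),
\]
where $\rho_\varepsilon=\rho^{(1)}_\varepsilon\otimes\rho^{(2)}_\varepsilon$ is a product mollifier on $\R^{d_1}\times\R^{d_2}$. Because the mollifier is a tensor product, $f_{m,\varepsilon}$ retains the separated-variable form $\sum_i(\phi^m_i*\rho^{(1)}_\varepsilon)\otimes(\psi^m_i*\rho^{(2)}_\varepsilon)$, and it is smooth.

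The key observation that makes the derivative bound work is that convolution commutes with differentiation and one can move the derivative onto $f$ rather than onto the (non-smooth) $g_m$: for $|\alpha|\le k_1$, $|\beta|\le k_2$,
\[
\partial_x^\alpha\partial_y^\beta f_{m,\varepsilon}
=\partial_x^\alpha\partial_y^\beta(g_m*\rho_\varepsilon).
\]
Here I would be more careful: since $g_m$ is merely continuous, I instead first mollify $f$ to get $f_\varepsilon=f*\rho_\varepsilon$, which is smooth with $\|\partial_x^\alpha\partial_y^\beta f_\varepsilon\|_\infty=\|(\partial_x^\alpha\partial_y^\beta f)*\rho^{(1)}_\varepsilon\otimes\rho^{(2)}_\varepsilon\|_\infty\le\|\partial_x^\alpha\partial_y^\beta f\|_\infty$, so that $\|f_\varepsilon\|_{C^{k_1,k_2}}\le\|f\|_{C^{k_1,k_2}}$, and $\partial_x^\alpha\partial_y^\beta f_\varepsilon\to\partial_x^\alpha\partial_y^\beta f$ uniformly on compacts by uniform continuity of $\partial_x^\alpha\partial_y^\beta f$ on compacts. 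Then I apply Lemma \ref{apx1} not to $f$ but to the smooth function $f_\varepsilon$ on a large ball, getting a tensor polynomial-type approximant; but even that approximant is only $C^0$-close, not $C^{k}$-close.

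So the honest route is the reverse order, and this is the step I expect to be the main obstacle: I want a single tensor sum that is simultaneously $C^{k_1,k_2}$-bounded and $C^{k_1,k_2}$-close on compacts. The cleanest fix is to use the \emph{Bernstein-polynomial} or Fourier-series proof of Stone--Weierstrass in the form that also controls derivatives. Concretely, on a fixed cube $Q_R=[-R,R]^{d_1+d_2}$, extend $f_\varepsilon$ periodically (after multiplying by a smooth product cutoff $\chi(x)\chi(y)$ that is $1$ on $Q_{R-1}$, which keeps the tensor structure) and expand in a multidimensional Fourier series $\sum_{k,l}c_{kl}e^{ik\cdot x}e^{il\cdot y}$; the partial sums are finite tensor sums of the required form, and because the periodized function is smooth, the Fourier partial sums converge to it in $C^{k_1,k_2}$ with uniformly bounded $C^{k_1,k_2}$ norm (the norm bound coming from $\sum|c_{kl}|\langle k\rangle^{k_1}\langle l\rangle^{k_2}<\infty$, i.e. absolute convergence of the differentiated series). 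Taking $R=R_n\to\infty$, $\varepsilon=\varepsilon_n\to 0$, and the Fourier truncation $N=N_n\to\infty$ suitably (diagonal argument) produces $f_n$ with the stated properties; the constant $C$ in $\|f_n\|_{C^{k_1,k_2}}\le C\|f\|_{C^{k_1,k_2}}$ absorbs the finitely many derivatives of the cutoff $\chi$. I would remark that the real content is entirely in this one-variable (periodic, smooth) approximation with derivative control; the tensor/product structure is automatic because every operation used — product cutoff, product mollifier, Fourier basis on a product of circles — respects the splitting of variables.
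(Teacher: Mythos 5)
Your argument is correct, and it reaches the conclusion by a genuinely different route than the paper. The paper's proof keeps Lemma \ref{apx1} as the only approximation input: after cutting off and mollifying to get $f^m_R\in C^\infty_c$, it writes $f^m_R(x,y)=\int_{-R}^y\int_{-R}^x \partial_x\partial_y f^m_R(\xi,\eta)\,d\xi\,d\eta$, applies Lemma \ref{apx1} to the top mixed partial $\partial_x\partial_y f^m_R$ (a mere $C^0$ approximation by tensor sums), and then integrates the tensor approximant back up; since $\int_{-R}^x\phi_i$ and $\int_{-R}^y\psi_i$ are again functions of one variable each, the tensor structure survives, and integration upgrades the uniform $C^0$ closeness of the mixed partial to $C^1$ closeness of $f^m_R$. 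This neatly resolves the consistency problem you correctly flag (one cannot approximate each $\partial^\alpha_x\partial^\beta_y f$ independently), and is elementary in that it needs nothing beyond Stone--Weierstrass and the fundamental theorem of calculus. Your approach abandons Lemma \ref{apx1} altogether and replaces it by Fourier partial sums on a large torus: the product cutoff and product mollifier keep the splitting of variables, and once the periodized function is smooth the rapid decay of its Fourier coefficients delivers absolute convergence of the differentiated series, hence $C^{k_1,k_2}$ convergence of the partial sums, whose tensor form $\sum c_{kl}e^{ik\cdot x}e^{il\cdot y}$ is automatic. The Fourier route has the advantage of handling all orders $k_1,k_2$ and all dimensions $d_1,d_2$ at once without a case-by-case integral representation, at the cost of invoking more machinery; one small point to make explicit is that the bound $\|f_n\|_{C^{k_1,k_2}}\le C\|f\|_{C^{k_1,k_2}}$ should be extracted by choosing the truncation level $N_n$ large enough relative to $(\varepsilon_n,R_n)$ in the diagonal step so that $\|S_{N_n}(\chi f_{\varepsilon_n})-\chi f_{\varepsilon_n}\|_{C^{k_1,k_2}}\le\|f\|_{C^{k_1,k_2}}$, rather than by a uniform-in-$\varepsilon$ bound on the $\ell^1$ norm of the differentiated coefficients, which would require more regularity than $\|f\|_{C^{k_1,k_2}}$ controls.
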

\begin{proof}
To keep the proof simple, we assume $k_i, d_i=1$. Let $\varphi(x)\in C^\infty_c(\R)$ and $\varphi(x)=1$ if $|x|\leq 1/2$, $\varphi(x)=0$ if $|x|\geq 1$. Fixed $R>0$, let 
$f_R(x,y)=f(x,y)\varphi(\frac{x}{R})\varphi(\frac{y}{R})$. Using standard diagonal argument, in order to prove the Lemma, we only need to show there exists a sequence of smooth functions $f_{R, n}(x,y)=\sum_{i=1}^{n}\phi^n_i(x)\psi^n_i(y)$ such that $\|f_{R,n}-f_R\|_{C^1}\rightarrow 0~~(n\rightarrow 0)$. Now define 
$$f^m_R(x,y)=\int_{\R^2} m^{2} \varphi(m\xi)\varphi(m\eta)f_R(x-\xi, y-\eta) d\xi d\eta\in C^\infty_c(\R^2).$$
Since $f_R\in C^1_c$, we have 
\begin{align}\label{C1est1}\lim_{m\rightarrow 0}\|f^m_R-f_R\|_{C^1}=0.\end{align}
On the other hand, we have 
$$f^m_R(x,y)=\int_{-R}^y\int_{-R}^x \partial_{x}\partial_y f^m_R(\xi, \eta) d\xi d\eta, $$ 
by Lemma \ref{apx1}, there exist a sequence of smooth functions $g_{R} ^{m,n}=\sum_{i=1}^{n}\phi^{m,n}_i(x)\psi^{m,n}_i(y)$ such that $g_{R} ^{m,n}(x,y)\rightarrow \partial_x\partial_y f^m_R$ uniformly as $n\rightarrow \infty$. Let $f^{m,n}_R=\int_{-R}^y\int_{-R}^x g^{m,n}_R(\xi, \eta) d\xi d\eta$, then 
\begin{align}\label{C1est2}\lim_{n\rightarrow \infty}\|f^{m,n}_R-f^m_R\|_{C^1}=0. \end{align} 
Combining  \eqref{C1est1} and \eqref{C1est2}, using standard  diagonal argument, we can find a sequence of function $f_{R,n}=\sum_{i=1}^{n}\phi^n_i(x)\psi^n_i(y)\rightarrow f_R$ in $C^1$.    
\end{proof}

Before giving the second moment formula, we first prove an estimate for $\E_\mu\<f,X_t\Y^2$. 
\begin{lemma}\label{le0}
$$\E_\mu \<f,X_t\Y=\<P_tf, \mu\Y;$$
$$\E_\mu\<f,X_t\Y^2\leq \left\{\<P_tf,\mu\Y^2+\int_0^t \<\mu, P_s[(P_{t-s}f)^2]\Y dr\right\}\exp(^{\|g\|_\infty t}).$$
Where $\{P_t\}$ is the semigroup whose generator is $\Delta$. 
\end{lemma}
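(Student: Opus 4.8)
The plan is to derive both identities from the martingale problem (MP) by choosing appropriate test functions, then iterating. For the first moment formula, I would apply (MP) with $f$ replaced by a time-dependent test function $s\mapsto P_{t-s}\phi$. More precisely, since $u(s,x):=P_{t-s}\phi(x)$ solves the backward equation $\partial_s u + \Delta u = 0$, the process $s\mapsto \langle P_{t-s}\phi, X_s\rangle$ should be a martingale: combining the martingale $M^f$ from (MP) with the standard time-dependent extension (justified by approximating $P_{t-s}\phi$ by fixed test functions on a time grid, as in the usual proof of the Dynkin-type formula for superprocesses), one gets $\E_\mu\langle P_{t-s}\phi,X_s\rangle = \langle P_t\phi,\mu\rangle$ for all $s\le t$; taking $s=t$ yields $\E_\mu\langle\phi,X_t\rangle=\langle P_t\phi,\mu\rangle$. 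The first technical point is to make the time-dependent test function argument rigorous; this is routine (Riemann-sum approximation on $[0,t]$, using that $\Delta$ generates a strongly continuous semigroup and $\phi\in C^2_b$), so I would only sketch it.

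For the second moment, I would use the identity $\langle \phi,X_t\rangle^2 = \langle\phi\otimes\phi, X_t\otimes X_t\rangle$ and apply It\^o's formula to the semimartingale $Y_s := \langle P_{t-s}\phi, X_s\rangle$. Writing $Y_s = Y_0 + \int_0^s(\cdots)\,dr + N_s$ where $N$ is the martingale part coming from $M^{P_{t-s}\phi}$, the drift terms cancel by the choice of $P_{t-s}\phi$, so $Y_s = \langle P_t\phi,\mu\rangle + N_s$ with $\langle N\rangle_s = \int_0^s\big[\langle (P_{t-r}\phi)^2, X_r\rangle + \langle g\cdot (P_{t-r}\phi)\otimes(P_{t-r}\phi), X_r\otimes X_r\rangle\big]dr$ by the quadratic variation in (MP). Hence
\begin{align*}
\E_\mu\langle\phi,X_t\rangle^2 = \E_\mu Y_t^2 = \langle P_t\phi,\mu\rangle^2 + \E_\mu\langle N\rangle_t,
\end{align*}
and $\E_\mu\langle N\rangle_t = \int_0^t \E_\mu\langle (P_{t-r}\phi)^2, X_r\rangle\,dr + \int_0^t \E_\mu\langle g\cdot(P_{t-r}\phi)^{\otimes 2}, X_r\otimes X_r\rangle\,dr$. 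The first integral is handled by the first moment formula: it equals $\int_0^t\langle P_r[(P_{t-r}\phi)^2],\mu\rangle\,dr$, which is exactly the stated quantity (after renaming $r$). For the second integral I would bound $g$ by $\|g\|_\infty$ and $(P_{t-r}\phi)\otimes(P_{t-r}\phi) \le$ (pointwise, when $\phi\ge0$, or after splitting into positive and negative parts in general) a product that lets me write $\langle g\cdot(P_{t-r}\phi)^{\otimes2}, X_r\otimes X_r\rangle \le \|g\|_\infty \langle |P_{t-r}\phi|, X_r\rangle^2$, then feed this back in. This produces an integral inequality of the form
\begin{align*}
\E_\mu\langle\phi,X_t\rangle^2 \le \langle P_t\phi,\mu\rangle^2 + \int_0^t\langle P_r[(P_{t-r}\phi)^2],\mu\rangle\,dr + \|g\|_\infty\int_0^t \E_\mu\langle |P_{t-r}\phi|, X_r\rangle^2\,dr,
\end{align*}
and Gronwall's lemma in the variable $t$ (keeping the first two terms as the forcing, which are themselves of the stated form up to the supremum bound on $g$) yields the factor $\exp(\|g\|_\infty t)$.

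A few points need care. First, to apply It\^o's formula and the martingale-problem machinery with the time-dependent function $P_{t-s}\phi$, I would approximate via Lemma \ref{apx1}/Lemma \ref{apx2} only where the covariance term $\langle g\cdot f\otimes f, X_s\otimes X_s\rangle$ must be made sense of — this is why the tensor-product approximation lemmas were proved — and pass to the limit using the uniform bounds $\|f_n\|_\infty\le\|f\|_\infty$ together with finiteness of the second moment (which can be obtained first for smooth compactly supported $\phi$ and then extended). Second, the reduction from signed $\phi$ to the bound involving $|P_{t-r}\phi|$ requires a small argument: one applies the computation to $\phi$ and notes $|P_{t-r}\phi|\le P_{t-r}|\phi|$, then replaces $|P_{t-r}\phi|$ by $P_{t-r}|\phi|$ inside the Gronwall input, which only enlarges the right-hand side. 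The \textbf{main obstacle} is the rigorous justification of the time-dependent test function / It\^o argument under (MP): one must verify that $s\mapsto\langle P_{t-s}\phi,X_s\rangle$ is genuinely a semimartingale with the claimed decomposition, which requires controlling the approximation of $P_{t-s}\phi$ by step functions in time uniformly well enough that both the drift $\int\langle\Delta(\cdot),X_s\rangle ds$ and the quadratic variation converge. Everything else is bookkeeping with the semigroup and Gronwall.
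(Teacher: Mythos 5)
Your proposal takes essentially the same route as the paper: extend the martingale problem to time-dependent test functions $s\mapsto P_{t-s}\phi$, deduce $\langle\phi,X_t\rangle=\langle P_t\phi,\mu\rangle+$ martingale, compute the second moment from the quadratic variation, bound $g$ by $\|g\|_\infty$, and close with Gronwall; the paper phrases the martingale part via the orthogonal martingale measure $M(ds,dx)$, which is equivalent to your It\^o decomposition of $Y_s=\langle P_{t-s}\phi,X_s\rangle$. One small fix: Gronwall should be run in the backward time variable $s$ on $G(s):=\E_\mu Y_s^2=\E_\mu\langle P_{t-s}\phi,X_s\rangle^2$ with $t$ fixed (as the paper does), rather than "in $t$", since the integrand on the right depends on $t$ through $P_{t-r}\phi$; with $\phi\ge0$ the absolute-value issue also disappears and $G$ satisfies a closed linear integral inequality.
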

\begin{proof}
Just as the proof of Proposition II 5.7 of \cite{perkins2002part}, if $\phi_t(x)\in C^{1,2}(\R_+\times \R^d)$, we can prove 
$$\<\phi_t,X_t\Y-\<\phi_0,\mu\Y-\int_0^t\<\dot{\phi}_s+\Delta\phi_ s,X_s\Y ds$$
is a martingale with quadratic variation 
$\<M\Y_t=\int_0^t\<\phi_s^2,X_s\Y ds+\int_0^t\<g\cdot\phi_s\otimes\phi_s, X_s\otimes X_s\Y ds$. 
In addition, there exists a martingale measure $M(dt,dx)$ such that for any $f\in\mathcal{B}(\R^d)$, 
$$\<f,X_t\Y=\<\mu, P_tf\Y+\int_0^t\int_{\R^d}P_{t-s}f(x)M(ds,dx)$$
$\left\{ \int_0^s p_{t-r}f(x)M(dr,dx)\right\}_{s\leq t}$ is a martingale from time $0$ to $t$ with 
\begin{align*}
\<\int_0^\cdot P_{t-r}f(x)M(dr,dx)\Y_s=&\int_0^s\int_{\R^d} (P_{t-r}f(x))^2 X_r(dx)dr\\
+&\int_0^s \int_{\R^{2d}}g(x,y)P_{t-r}f(x)P_{t-r}f(y)X_r(dx)X_r(dy)dr
\end{align*}
Hence 
$$\E_\mu \<f,X_t\Y=\<P_tf, \mu\Y,$$
\begin{align*}
\E_\mu[\<P_{t-s}f,X_s\Y^2]= &[\<P_tf,\mu\Y]^2+\E_\mu \int_0^s\int_{\R^d} (P_{t-r}f(x))^2 X_r(dx)dr\\
+&\E_\mu \int_0^s \int_{\R^{2d}}g(x,y)P_{t-r}f(x)P_{t-r}f(y)X_r(dx)X_r(dy)dr\\
\leq& [\<P_tf,\mu\Y]^2+\int_0^s \<\mu,P_r[(P_{t-r}f)^2]\Y dr +\|g\|_{\infty}\int_0^s\E_\mu [\<P_{t-r}f,X_r\Y^2 ]ds
\end{align*}
By Gronwall's Inequality, we obtain 
$$E_\mu[\<f,X_t\Y^2]\leq \left\{[\<P_tf,\mu\Y]^2+\int_0^t \<\mu,P_r[(P_{t-s}f)^2]\Y ds \right\}\exp({\|g\|_{\infty}t})$$
\end{proof}

Now we are in a position to prove the second moment formula:
\begin{theorem}
Let $Q_t=P_t \otimes P_t$ and $Q_t^{g}$ be the semigroup generated by ${\Delta}+{g}$.
 Then for all $\phi\in C_b(\R^d)$,
$$\E_\mu(X_t(\phi)^2)=\<Q_t^{g}(\phi\otimes \phi),\mu\otimes\mu\Y+\int_0^t\<P_s(\mathfrak{\pi} Q_{t-s}^{g}(\phi\otimes \phi))\mathrm{d}s,\mu\Y$$
Here 
$$(\pi f)(x)=f(x,x);~~~f\in \mathcal{B}(\R^{2d}), \, x\in \R^d. $$
\end{theorem}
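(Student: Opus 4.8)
\emph{Proof sketch.}
The plan is to derive a closed evolution equation for the second moment measure of $X_t$ by applying It\^o's product rule to pairs of linear functionals $\<\phi_s,X_s\Y$, and then to solve that equation by choosing the test function to satisfy the backward equation associated with $Q^g_t$.

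First I would record the following product identity. Fix $t>0$ and let $\phi_s,\psi_s\in C^{1,2}([0,t]\times\R^d)$ have bounded derivatives. Writing $Y_s=\<\phi_s,X_s\Y$, $Z_s=\<\psi_s,X_s\Y$, the martingale representation from the proof of Lemma~\ref{le0} gives $dY_s=\<\dot\phi_s+\Delta\phi_s,X_s\Y\,ds+dN^\phi_s$ and similarly for $Z$, where $N^\phi,N^\psi$ are continuous martingales whose cross variation, obtained by polarizing the quadratic variation in Lemma~\ref{le0} and using the symmetry of $g$, is $\int_0^s\<\phi_r\psi_r,X_r\Y\,dr+\int_0^s\<g\cdot\phi_r\otimes\psi_r,X_r\otimes X_r\Y\,dr$. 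Applying It\^o's formula to $Y_sZ_s$ and taking expectations — the stochastic integrals are genuine martingales after localizing by $\tau_R=\inf\{s:X_s(1)>R\}$ and passing $R\to\infty$, using that $X_s(1)$ is a nonnegative martingale and has finite polynomial moments for finite $\mu$ — yields, with $F_s:=\phi_s\otimes\psi_s$ and the diagonal map $\pi$,
\begin{equation}\label{star}
\E_\mu\<F_t,X_t\otimes X_t\Y=\<F_0,\mu\otimes\mu\Y+\E_\mu\int_0^t\<(\partial_s+\Delta_x+\Delta_y+g)F_s,X_s\otimes X_s\Y\,ds+\E_\mu\int_0^t\<\pi F_s,X_s\Y\,ds ,
\end{equation}
where one uses $\pi(\phi_s\otimes\psi_s)=\phi_s\psi_s$ and $\<\phi_s,X_s\Y\<\dot\psi_s+\Delta\psi_s,X_s\Y=\<\phi_s\otimes(\dot\psi_s+\Delta\psi_s),X_s\otimes X_s\Y$ (and symmetrically in $\phi,\psi$).

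By bilinearity \eqref{star} holds for any finite sum $F_s=\sum_i\phi^i_s\otimes\psi^i_s$. Next I would fix $\Psi\in C^\infty_c(\R^{2d})$ and set $F_s:=Q^g_{t-s}\Psi$; since $g\in C^2_b$, standard parabolic regularity makes $F$ and the derivatives $\partial_sF,\Delta_xF,\Delta_yF$ bounded continuous on $[0,t]\times\R^{2d}$, with $\partial_sF_s+(\Delta_x+\Delta_y+g)F_s=0$, $F_t=\Psi$ and $F_0=Q^g_t\Psi$. Treating $(s,x)$ as a single spatial block (after extending $F$ off $[0,t]$) and invoking Lemma~\ref{apx2} with approximation of derivatives up to second order, one approximates $F$ together with $\partial_sF,\Delta_xF,\Delta_yF$ uniformly on compacts and with uniformly bounded norms by such finite sums; since $|\<H,X_s\otimes X_s\Y|\le\|H\|_\infty X_s(1)^2$ and $\sup_{s\le t}\E_\mu X_s(1)^2<\infty$ by Lemma~\ref{le0}, dominated convergence transfers \eqref{star} to this $F$. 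The middle term then vanishes, and using the first moment formula $\E_\mu\<h,X_s\Y=\<P_sh,\mu\Y$ and Fubini,
\begin{equation}\label{smoothcase}
\E_\mu\<\Psi,X_t\otimes X_t\Y=\<Q^g_t\Psi,\mu\otimes\mu\Y+\int_0^t\<P_s(\pi Q^g_{t-s}\Psi),\mu\Y\,ds .
\end{equation}

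Finally I would remove the regularity on $\Psi$. Given $\phi\in C_b(\R^d)$, pick $\Psi_n\in C^\infty_c(\R^{2d})$ with $\|\Psi_n\|_\infty\le\|\phi\|_\infty^2$ and $\Psi_n\to\phi\otimes\phi$ pointwise (mollify $\phi$ and multiply by a cutoff). In \eqref{smoothcase} the left side converges to $\E_\mu\<\phi\otimes\phi,X_t\otimes X_t\Y=\E_\mu(X_t(\phi)^2)$ by dominated convergence with dominating function $\|\phi\|_\infty^2X_t(1)^2$, integrable by Lemma~\ref{le0}; on the right, the Feynman--Kac representation $Q^g_\tau h(z)=\E_z[h(\tilde B_\tau)\exp(\int_0^\tau g(\tilde B_r)\,dr)]$, with $\tilde B$ the Brownian motion on $\R^{2d}$ of generator $\Delta_x+\Delta_y$, gives $\|Q^g_\tau h\|_\infty\le e^{\|g\|_\infty\tau}\|h\|_\infty$ and $Q^g_\tau\Psi_n\to Q^g_\tau(\phi\otimes\phi)$ pointwise, so three further applications of dominated convergence (against $\mu\otimes\mu$, then $\mu$, then in $s\in[0,t]$) produce the asserted identity. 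I expect the main obstacle to be the transfer of \eqref{star} to the genuinely non-product, time-dependent test function $Q^g_{t-s}\Psi$: one must control the joint $C^{1,2}$ approximation in $(s,x,y)$ and check that $Q^g_{t-s}\Psi$ has bounded time derivative and spatial Laplacian up to $s=t$ — precisely where the hypothesis $g\in C^2_b$ enters; the martingale property of the It\^o integrals and the moment bounds for $X_s(1)$ are routine by comparison.
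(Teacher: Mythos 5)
Your proposal is correct and follows essentially the same route as the paper: It\^o's product rule on $\<\phi_s,X_s\Y\<\psi_s,X_s\Y$, take expectations, extend the resulting identity from tensor products to general $C^{1,2}_b$ test functions via Lemma~\ref{apx2}, then plug in $F_s=Q^g_{t-s}\Psi$ so that the backward equation kills the middle term. The one technical divergence is in justifying the passage to the limit in the approximation step: you argue by two applications of dominated convergence (first at fixed $\omega$ against the finite measure $X_s\otimes X_s$, using locally uniform convergence plus a uniform bound, then in $\E$ against $\|f\|_\infty X_s(1)^2$), whereas the paper instead makes the spatial tail estimate explicit via the cutoffs $I_R,\,I'_R$ and the bound on $\E_{\delta_0}X_s(I'_R)^2$; both are sound, and yours is the slightly cleaner bookkeeping.
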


\begin{proof}
We assume $\mu=\delta_0$ for simple. The proof for general $\mu\in \mathcal{M}_F(\mathbb{R}^d)$ is similar. 

$\forall \phi_s, \, \psi_s \in C^{1,2}_b(\R_+\times\R^d)$, using Ito's Formula,
\begin{displaymath}
\begin{aligned}
\mathrm{d}(X_s(\phi_s)X_s(\psi_s))&=X_s(\phi_s)\mathrm{d}(M_s^\psi+V_s^\psi)+X_s(\psi_s)\mathrm{d}(M_s^\phi+V_s^\phi)+\mathrm{d}\langle M^\phi,M^\psi\rangle_s\\
\end{aligned}
\end{displaymath}
Where 
$$V^\phi_s=\langle \phi, \mu\rangle+\int_0^s\langle \dot{\phi_r}+\Delta \phi_r,  X_r\rangle dr,~~~~~V^\psi_s=\langle \psi, \mu\rangle+\int_0^s\langle \dot{\psi_r}+\Delta \psi_r,  X_r\rangle dr. $$
$$M_s^\phi=\langle \phi_s, X_s\rangle-V^\phi_s, ~~~~M^{\psi}_s=\langle \psi, X_s\rangle-V^\psi_s;$$
taking expectation,
\begin{displaymath}
\begin{aligned}
\E_{\delta_0}(X_t(\phi_t)X_t(\psi_t))&=\phi_0(0)\psi_0(0)+\E_{\delta_0}\int_0^t X_s(\phi_s)\mathrm{d} V_s^\psi+\E_{\delta_0}\int_0^t X_s(\psi_s)\mathrm{d} V_s^\phi+\E_{\delta_0}\langle M^\phi,M^\psi\rangle_t\\
&=\phi_0(0)\psi_0(0)+\E_{\delta_0}\int_0^t [X_s(\phi_s)X_s(\dot{\psi}_s+{\Delta}\psi_s)+X_s(\psi_s)X_s(\dot{\phi}_s+{\Delta}\phi_s)]ds\\
&~~~~+X_s\otimes X_s(g\cdot \phi_s\otimes \psi_s)\mathrm{d}s+\E_{\delta_0}\int_0^t X_s(\phi_s \psi_s)\mathrm{d}s\\
&=\phi_0(0)\psi_0(0)+\E_{\delta_0}\int_0^t X_s\otimes X_s(\partial_s(\phi_s\otimes \psi_s)+({\Delta}+{g})(\phi_s\otimes\psi_s))\mathrm{d}s\\
&~~~~+\E_{\delta_0}\int_0^t X_s(\mathfrak{\pi} \phi_s\otimes\psi_s)\mathrm{d}s\\
\end{aligned}
\end{displaymath}
By linearity, identity 
\begin{align}\label{SecondMoment}
\E_{\delta_0}(X_t\otimes
X_t (f(t)))=f(0,0,0)+\E_{\delta_0}\int_0^tX_s\otimes X_s(\dot{f}(s)+{\Delta}f(s)+gf(s))\mathrm{d}s+\E_{\delta_0}\int_0^tX_s(\mathfrak{\pi} f(s))\mathrm{d}s
\end{align}
holds for all $f(t,x,y)=\sum_{i=1}^{n}\phi_i(t,x)\psi_i(t,y)$. 

$\forall f(s,x,y)\in C^{1,2}_b(\R_+\times\R^{2d})$.  By Lemma \ref{apx2}, we can find functions $f^n(s,x,y)=\sum_{i=1}^n \phi_i^n(s,x)\psi_i^n(s,y)$  such that for any fixed $R>0$, 
$$\lim_{n\rightarrow 0}\|f^n-f\|_{C^{1,2}(Q_R)}=0$$ and $\|f^n\|_{C^{1,2}_b}\leq C\|f\|_{C^{1,2}_b}$. Here $Q_R= [0, R]\times[-R,R]^{2d}$ and $C$ is independent with $n$. 

Let $\delta^n(s,x,y)=f(s,x,y)-f^n(s,x,y)$, then $\lim_{n\rightarrow 0}\|\delta^n\|_{C^{1,2}(Q_R)}=0$ and $\|\delta^n\|_{C^{1,2}_b}\leq C\|f\|_{C^{1,2}_b}$, here $C$ is independent with $n$. Define $I_R(x)=I_{[-R,R]^d}(x)$, $I'_R(x)=1-I_R(x)$. we get
$$|\delta^n(s,x,y)|\leq C (I'_R(x) I'_R(y)+I'_R(x) I_R(y)+I_R(x) I'_R(y))+\|\delta^n\|_{C^{1,2}(Q_R)} I_R(x) I_R(y).$$ 
Using Lemma \ref{le0}, 
\begin{equation}\label{EdeltaR}
\begin{aligned}
\E_{\delta_0} X_s\otimes X_s (|\delta^n(s)|)
\leq&  C \E_{\delta_0}\left( X_s(I_R) X_s(I'_R) + X_s(I'_R)^2\right)+c_n \E_{\delta_0}X_s(I_R)^2\\
\leq& C \left\{\E_{\delta_0} X_s(I'_R)^2 + \left[\E_{\delta_0}X_s(I'_R)^2\right]^{\frac{1}{2}}\left[\E_{\delta_0}X_s(I_R)^2\right]^{\frac{1}{2}} \right\}\\
&+c_n \E_{\delta_0}X_s(I_R)^2.
\end{aligned}
\end{equation}
Here $c_n=\|\delta^n\|_{L^\infty(Q_R)}\rightarrow 0~~(n\rightarrow \infty)$.  

$\E_{\delta_0}X_s(I_R)^2 \leq \E_{\delta_0}X_s(1)^2 \leq C $ and again by Lemma \ref{le0},
\begin{align}\label{I'}
\E_{\delta_0}X_s(I'_R)^2 \leq C\left\{[P_s\I'_R]^2(0)+\int_0^s P_r[(P_{s-r}I'_R)^2](0)dr\right\}\end{align}
For any $\epsilon>0$, choose $R$ so large, such that $\frac{1}{(\sqrt{2\pi t})^d}\int_{|y|>\frac{R}{2}}e^{-|y|^2/2t}dy<\epsilon^2$.
Then
  \begin{align}\label{I'1}\sup_{s\leq t}[P_s\I'_R]^2(0)\leq \z(\frac{1}{\sqrt{2\pi t}}\int_{|x|>R}e^{-x^2/2t}dx \y)^2<\epsilon^2 . \end{align}

$$[(P_{s-r}I'_R)^2](x)\leq \left(I_{\frac{R}{2}}(x)\int_{|y|>R}e^{|x-y|^2/2(s-r)}dy+I'_{\frac{R}{2}}(x)\right)^2\leq C\epsilon^2 I_{\frac{R}{2}}(x)+CI'_{\frac{R}{2}}(x)$$
Hence
\begin{align}\label{I'2}\int_0^s P_r[(P_{s-r}I'_R)^2](0)dr\leq C \epsilon^2 s + C \int_0^s\frac{1}{\sqrt{2\pi r}}\int_{|y|>\frac{R}{2}}e^{|y|^2/2r} dy dr\leq C\epsilon^2\end{align}
So by \eqref{EdeltaR}, \eqref{I'}, \eqref{I'1}, \eqref{I'2}, we have
\begin{equation}\label{epsilon1}
\lim_{n\rightarrow \infty}\sup_{s\leq t}\E_{\delta_0} X_s\otimes X_s (|\delta^n(s)|)=0. 
\end{equation} By the same argument we can prove\\
\begin{equation}\label{epsilon2}
\begin{aligned}
  &\lim_{n\rightarrow \infty}\left|\E_{\delta_0}\int_0^t X_s\otimes X_s(\partial_s f^n(s)+({\Delta}+g )f^n(s))\mathrm{d}s-\E_{\delta_0}\int_0^t X_s\otimes X_s(\partial_s f(s)+({\Delta}+g )f(s))\mathrm{d}s\right|\\
  \leq &\lim_{n\rightarrow \infty} \int_0^t \E_{\delta_0} X_s\otimes X_s\left(\left|\partial_s\delta^n(s)+(\Delta+g)\delta^n(s)\right|\right)ds \\
  \leq &C\|f\|_{C^{1,2}([0,t]\times\mathbb{R}^2)} \lim_{R\rightarrow \infty }\int_0^t \E_{\delta_0}\left( X_s(I_R) X_s(I'_R) + X_s(I'_R)^2\right)ds\\
  &+\lim_{R\rightarrow}\lim_{n\rightarrow \infty} \|\delta^n\|_{C^{1,2}(Q_R)} \int_0^t \E_{\delta_0} X_s(I_R)^2ds\\
 =&0
\end{aligned}
\end{equation}
Similarly, 
\begin{equation}\label{epsilon3}
\lim_{n\rightarrow \infty}\left|\E_{\delta_0}\int_0^tX_s(\mathfrak{\pi} f^n(s))\mathrm{d}s-\E_{\delta_0}\int_0^t X_s(\mathfrak{\pi} f(s))\mathrm{d}s\right|=0.
\end{equation}
So we obtain \eqref{SecondMoment} holds for all $f\in C^{1,2}_b(\R_+\times\mathbb{R}^{2d})$.

Suppose $\phi\in C^\infty_c$, let $f_s=Q_{t-s}^{g}\phi\otimes \phi$ (define $f_s=f_t$ if $s>t$), then $f\in C^{1,2}_b(\R_+\times\mathbb{R}^2)$. Hence for all $\phi\in C_c^\infty(\mathbb{R})$, we have the formula
  $$\E_{\delta_0}(X_t(\phi)^2)=(Q_t^{g}(\phi\otimes \phi))(0,0)+\int_0^t[P_s(\mathfrak{\pi} Q_{t-s}^{g}(\phi\otimes \phi))](0)\mathrm{d}s. $$
A simple approximation argument shows for all $\phi\in C_b(\mathbb{R}^d)$

$$\E_{\delta_0}(X_t(\phi)^2)=\<Q_t^{g}(\phi\otimes \phi),\delta_0\otimes\delta_0\Y+\int_0^t\<P_s(\mathfrak{\pi} Q_{t-s}^{g}(\phi\otimes \phi))\mathrm{d}s,\delta_0\Y$$\end{proof}

\begin{remark}
Indeed, we can also using conditional Laplace transform introduced by \cite{mytnik2007local} to get the same formula. However, the proof presented here is more elementary.
\end{remark}

\section{Compact Property}
In this section, we first use the moment formula to get the equation that the density of Super-Brownian  motion satisfies and then prove the compact support property for a class of parabolic SPDEs.

\begin{lemma}
Suppose $\mu(\R)<\infty$, then the density of the 1-$d$ Super-Brownian  Motion in random environments satisfies the following SPDE:
$$\partial_tu=\Delta u+\sqrt{u}\varphi_k \dot{\beta}^k_t+uh_k\dot{w}_t^k$$
in weak sense, which means for any $\phi\in C_c^\infty(\R)$ we have
\begin{equation} 
\begin{split}
\int_\R \phi(x)u(t,x)dx=&\int_\R \phi(x)u(0,x)+\int_0^t\int_\R u(s,x)\phi''(x)dxds\\
&+\sum_{k=1}^\infty \int_0^t\int_\R \phi(x)\varphi_k(x)\sqrt{u(s,x)}dxd\beta^k_s+\sum_{k=1}^\infty \int_0^t\int_\R \phi(x)h_k(x)u(s,x)dxdw^k_s. 
\end{split}
\end{equation}
Where $\beta^k$, $w^k$ are independent Brownian Motion. 
\end{lemma}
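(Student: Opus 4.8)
The plan is to identify the right martingale problem characterization and then extract the noise decomposition from the quadratic variation structure using the second moment formula from Section 2. First I would establish that $X_t$ is absolutely continuous with respect to Lebesgue measure, so that $X_t(dx) = u(t,x)\,dx$ for a density $u(t,\cdot)$; this should follow from the second moment formula of the previous theorem, which controls $\E_\mu(X_t(\phi)^2)$ in terms of $L^2$-type quantities, via a standard argument (testing against approximate identities and using the regularizing effect of the heat semigroup $P_s$ inside the formula, exactly as in the classical super-Brownian motion case treated in Perkins' notes). The $g \in C^2_b$ assumption ensures the environment contribution $\int_0^t \langle g\cdot f\otimes f, X_s\otimes X_s\rangle\,ds$ is comparable to $\|g\|_\infty \int_0^t \langle f, X_s\rangle^2\,ds$, which does not destroy the absolute continuity.

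Next, with the density in hand, I would rewrite the martingale problem (MP). For $\phi \in C_c^\infty(\R)$, the process
$$M_t^\phi = \langle \phi, X_t\rangle - \langle \phi, \mu\rangle - \int_0^t \langle \Delta\phi, X_s\rangle\,ds = \int_\R \phi(x)u(t,x)\,dx - \int_\R \phi(x)u(0,x)\,dx - \int_0^t\int_\R u(s,x)\phi''(x)\,dx\,ds$$
is a continuous square-integrable martingale with quadratic variation
$$\langle M^\phi\rangle_t = \int_0^t \int_\R \phi^2(x) u(s,x)\,dx\,ds + \int_0^t \int_{\R^2} g(x,y)\phi(x)\phi(y) u(s,x)u(s,y)\,dx\,dy\,ds.$$
The key observation is that this quadratic variation splits into a "local" part with density $\phi^2 u$ — the classical super-Brownian branching term — and a "nonlocal" part governed by the covariance kernel $g$. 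I would then invoke a martingale representation theorem: there exist two independent families of i.i.d. space-time white noises, or equivalently two sequences $\{\beta^k\}$, $\{w^k\}$ of independent standard Brownian motions, adapted to the filtration (possibly after enlargement), together with an orthonormal basis $\{\varphi_k\}$ of $L^2(\R)$ and functions $\{h_k\}$ with $\sum_k h_k(x)h_k(y) = g(x,y)$ (such $h_k$ exist because $g \in C^2_b$ is a symmetric nonnegative-definite kernel — a Mercer-type expansion), such that
$$M_t^\phi = \sum_{k=1}^\infty \int_0^t \int_\R \phi(x)\varphi_k(x)\sqrt{u(s,x)}\,dx\,d\beta^k_s + \sum_{k=1}^\infty \int_0^t \int_\R \phi(x)h_k(x)u(s,x)\,dx\,dw^k_s.$$
One checks that the right-hand side indeed has the prescribed quadratic variation: the $\beta$-part contributes $\sum_k \int_0^t \langle \phi\varphi_k\sqrt{u(s,\cdot)}, 1\rangle^2\,ds = \int_0^t\langle \phi^2 u(s,\cdot),1\rangle\,ds$ by Parseval, and the $w$-part contributes $\sum_k \int_0^t \langle \phi h_k u(s,\cdot),1\rangle^2\,ds = \int_0^t\int_{\R^2} g(x,y)\phi(x)\phi(y)u(s,x)u(s,y)\,dx\,dy\,ds$ by the expansion of $g$; the cross-variation vanishes by independence of the $\beta$'s and $w$'s.

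The main obstacle I anticipate is the martingale representation step: one must produce a \emph{single} pair of noise families that simultaneously represents $M^\phi$ for \emph{all} test functions $\phi$ in a consistent way, i.e. one needs an orthogonal martingale-measure decomposition of the worthy martingale measure $M(ds,dx)$ associated to (MP), splitting its covariance measure into the sum of the diagonal branching measure $u(s,x)\,\delta_x(dy)\,dx\,ds$ and the smooth environment measure $g(x,y)u(s,x)u(s,y)\,dx\,dy\,ds$. This requires checking that the branching component, as a martingale measure, is orthogonal to (can be chosen independent of) the environment component — intuitively clear since the branching randomness and the environmental randomness are driven by different sources in the particle picture, but at the level of the limiting martingale problem it needs the covariance structure to genuinely decouple, which is exactly what the additive split of $\langle M^\phi\rangle_t$ provides. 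The passage from an $L^2$-valued martingale measure to the countable families $\{\beta^k\}$, $\{w^k\}$ is then routine: expand in the basis $\{\varphi_k\}$ for the white-noise part and use the Mercer expansion $g = \sum_k h_k\otimes h_k$ (with $h_k \in C_b$, using $g\in C^2_b$) for the colored part, defining $\beta^k_t$ and $w^k_t$ as the corresponding stochastic integrals and verifying they are independent Brownian motions via Lévy's characterization.
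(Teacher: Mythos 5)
Your proposal follows essentially the same route as the paper: the paper also uses the second moment formula to show that $\langle p(\epsilon,x-\cdot),X_t\rangle$ is $L^2(\Omega\times[0,T]\times\R)$-Cauchy as $\epsilon\to 0$ (hence the density $u$ exists), then rewrites the martingale problem for $u$ and invokes a martingale representation theorem to split $M^\phi$ into a Brownian-sheet part $\int\phi\sqrt{u_s}\,B(ds,dx)$ and an independent spatially-colored part $\int\phi\,u_s\,W(ds,x)dx$ with $\E[W(t,x)W(s,y)]=(s\wedge t)g(x,y)$, finally expanding in the orthonormal basis $\{\varphi_k\}$ and the Mercer-type factorization $g=\sum_k h_k\otimes h_k$. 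The orthogonal martingale-measure decomposition you flag as the main obstacle is exactly what the paper's appeal to the martingale representation theorem is doing; your quadratic-variation check and independence verification are the same as the paper's, just spelled out a bit more.
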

\begin{proof}
By the second moment formula, we have 
$$\E_\mu(\<X_t,\phi\Y\<X_t,\psi\Y)=(\mu\otimes\mu)(Q_t^g(\phi\otimes\psi))+\int_0^t \mu[P_s(\pi Q_{t-s}^g(\phi\otimes\psi))]$$
Denote $q_t^g$ be the density of $Q_t^g$. Using the above equation,  
\begin{equation}\label{L2}
\begin{split}
&\E_\mu(\<X_t, p({\epsilon}, x-\cdot)\Y\<X_t,p({\epsilon '}, x-\cdot)\Y)\\
=&\int_{\R^2} \mu(y_1)\mu(y_2)\int_{\R^2}q_t^g((y_1,y_2),(z_1,z_2)) p(\epsilon,x-z_1) p(\epsilon ', x-z_2))dz_1dz_2\\ 
+&\int_0^t ds\int_\R \mu(dw) \int_{\R^3}p(s,w-y)q_{t-s}^g((y,y),(z_1,z_2)) p(\epsilon,x-z_1) p(\epsilon ', x-z_2))dz_1dz_2dy\\
=&I(t,x)+II(t,x)
\end{split}
\end{equation}
It's not hard to prove that 
\begin{equation}\label{estimateI}
\begin{split}
&\int_0^T dt \int_\R I(t,x) dx\\
=&\int_0^Tdt\int_\R dx\int_{\R^2} \mu(dy_1)\mu(dy_2)\int_{\R^2}q_t^g((y_1,y_2),(z_1,z_2)) p(\epsilon,x-z_1) p(\epsilon ', x-z_2))dz_1dz_2\\
\xlongrightarrow{\epsilon,\epsilon'\rightarrow 0}&\int_0^Tdt\int_\R dx\int_{\R^2} \mu(dy_1)\mu(dy_2)\int_{\R^2}q_t^g((y_1,y_2),(x,x)) \\
\leq& C \int_0^Tdt \int_{\R^2}\mu(dy_1)\mu(dy_2)\int_{\R}p(t, x-y_1)p(t,x-y_2)dx\\
\leq & C\mu(\R)^2 \int_0^T \frac{1}{\sqrt{t}}dt<\infty 
\end{split}
\end{equation}
By the same argument, 
\begin{equation}\label{estimateII}
\begin{split}
\int_0^T dt \int_\R II(t,x) dx\xlongrightarrow{\epsilon,\epsilon' \rightarrow 0} &\int_0^T dt \int_0^t ds\int_\R \mu(dw) \int_\R dx\int_{\R^3}p(s,w-y)q_{t-s}^g((y,y),(x,x))dy\\
\leq & C \mu(\R) \int_0^T dt \int_0^t \frac{1}{\sqrt{t-s}}ds<\infty 
\end{split}
\end{equation}
By \eqref{L2}, \eqref{estimateI}, \eqref{estimateII},  we get if $\mu(\R)<\infty$, $\{\< p(\epsilon, x-\cdot),X_t\Y\}$ is a Cauchy sequence in $L^2 (\Omega \times [0,T]\times\R)$, define 
$$u_t(x)=\lim_{\epsilon\rightarrow 0}\<p(\epsilon, x-\cdot),X_t\Y$$
we have 
$$M^\phi_t=(u_t,\phi)-(u_0,\phi)-\int_0^t(u_s,\Delta\phi)ds$$ is a martingale with quadratic variation $$\<M^\phi\Y_t=\int_0^t(\phi^2,u_s)ds+\int_0^t ds \int_{\R^2}\sum_{k} h_k(x)h_k(y) \phi(x)\phi(y)dxdy$$
By martingale representation theorem, there exist independent Brownian sheet $B(t,x)$ and time-white, space-colored Guassian noise $W(t,x)$ with $\E(W(t,x)W(s,y))=(s\wedge t) g(x,y)$ such that 
$$M_t^\phi=\int_0^t\int_\R\phi(x)\sqrt{u_s(x)}B(ds,dx)+\int_0^t \int_\R \phi(x)u_s(x)W(ds,x)dx$$
The rightside of above equation can be written as 
 $$\int_0^t (\phi,\sqrt{u}\varphi_k)d\beta_{s}^k+\int_0^t (\phi,u_sh_k)dw_s^k$$
where $\{\varphi_k\}$ is a orthonormal basis of $L^2(\R)$, $\sum_k h_k(x)h_k(y)=g(x,y)$ and $\beta^k_t, ~~w^k_t$ are independent Brownian motions. 
Hence 
$$(u_t,\phi)=(u_0,\phi)+\int_0^t(u_s,\phi'')ds+\int_0^t(\sqrt{u_s}\varphi_k,\phi)d\beta_s^k+\int_0^t (u_sh_k,\phi)dw^k_s$$
\end{proof}

Now we begin to consider the compact support property of following parabolic SPDEs:
$$
\partial_tu=\Delta u+u^\gamma\varphi_k \dot{\beta_t}^k+uh_k\dot{w_t}^k
$$
\begin{equation}\label{eq2}
(u_t,\phi)=(u_0,\phi)+\int_0^t(u_s,\phi'')ds+\int_0^t(u_s^\gamma \varphi_k,\phi)d\beta_s^k+\int_0^t (u_sh_k,\phi)dw^k_s
\end{equation}
Here $\gamma\in [1/2,1)$, $\{\varphi_k\}$ is the standard orthonormal basis of $L^2(\R)$, $\{h_k\}$ satisfies \\
$$\sup_{x\in\R}\sum_k h^2_k(x)<\infty. $$

Define 
$$C_{tem}=\z\{f\in C(\R): \int_{\R}e^{-\lambda|x|}|f(x)|dx<\infty; \forall \lambda>0\y\}$$

The next lemma is standard. 
\begin{lemma}\label{le1}
If $u_t(x)\in C_{tem}$ is the weak solution to Equation (\ref{eq2}) with initial data $u_0\in C_{tem}$, then $u_t(x)$ satisfies the following equation:
\begin{equation}\label{mildform}
\begin{aligned}
u_t(x)=&p_t*u_0(x)+\int_0^t \z[\int_{\R} p_{t-s}(x-y) u^\gamma_s(y)\varphi_k(y)dy\y]d\beta^k_s\\
&+\int_0^t \z[\int_{\R} p_{t-s}(x-y)u_s(y)h_k(y)dy\y]dw^k_s
\end{aligned}
\end{equation}
\end{lemma}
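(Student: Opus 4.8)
The plan is to establish the equivalence between the weak (distributional) formulation \eqref{eq2} and the mild (Duhamel) formulation \eqref{mildform} by testing the weak equation against the heat kernel $p_{t-s}(x-\cdot)$ and integrating out the time variable, i.e.\ a stochastic Fubini / variation-of-constants argument. The standard device is to fix $(t,x)$, apply the weak formulation \eqref{eq2} with a test function that is itself time-dependent — namely $\phi_s(y) = p_{t-s}(x-y)$ — using the fact that $\partial_s p_{t-s}(x-y) + \Delta_y p_{t-s}(x-y) = 0$, so that the drift term $\int_0^t (u_s,\Delta\phi_s)\,ds$ is exactly cancelled by the time-derivative contribution coming from the moving test function. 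What remains is precisely $p_t * u_0(x)$ plus the two stochastic integrals in \eqref{mildform}.

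Concretely, I would proceed as follows. First, since $p_{t-s}(x-\cdot)$ is not compactly supported and is singular as $s \uparrow t$, I would work with a regularized/truncated version: replace $\phi_s$ by $p_{t-s+\delta}(x-\cdot)$ for $\delta>0$ (shifting the singularity away from the endpoint), and if needed multiply by a smooth spatial cutoff $\chi_R$, so that the time-dependent test function lies in $C^{1,2}_b(\R_+\times\R)$ and the weak formulation \eqref{eq2} genuinely applies. For a time-dependent test function one has the extended identity
\[
(u_t,\phi_t) = (u_0,\phi_0) + \int_0^t (u_s, \dot\phi_s + \phi_s'')\,ds + \int_0^t (u_s^\gamma\varphi_k,\phi_s)\,d\beta_s^k + \int_0^t (u_s h_k,\phi_s)\,dw_s^k,
\]
which follows from \eqref{eq2} by approximating $\phi_s$ by step functions in $s$ (piecewise constant test functions) and passing to the limit, using that the stochastic integrals against $d\beta^k, dw^k$ are continuous in the integrand. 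With $\phi_s = p_{t-s+\delta}(x-\cdot)$ the term $\dot\phi_s + \phi_s''$ vanishes identically, leaving
\[
(u_t, p_\delta(x-\cdot)) = (u_0, p_{t+\delta}(x-\cdot)) + \int_0^t \Big(\int_\R p_{t-s+\delta}(x-y)u_s^\gamma(y)\varphi_k(y)\,dy\Big)d\beta_s^k + \int_0^t \Big(\int_\R p_{t-s+\delta}(x-y)u_s(y)h_k(y)\,dy\Big)dw_s^k.
\]
Then I would let $\delta \downarrow 0$: the left side converges to $u_t(x)$ (by continuity of $u$ and the approximate-identity property of $p_\delta$), the first term to $p_t * u_0(x)$, and the two stochastic integrals to their $\delta=0$ counterparts by an $L^2$-isometry estimate, controlling $\E\int_0^t \|\int_\R (p_{t-s+\delta}-p_{t-s})(x-y)u_s^\gamma(y)\varphi_k(y)\,dy\|^2\,ds \to 0$ and similarly for the $h_k$ term, using the Plancherel/summation bound $\sum_k \varphi_k \text{-part}$ together with the $C_{tem}$ control on $u_s$ and the hypothesis $\sup_x \sum_k h_k^2(x) < \infty$. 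Removing the spatial cutoff $\chi_R$ as $R\to\infty$ is handled the same way, using that $u \in C_{tem}$ and the Gaussian decay of $p_{t-s}$ make all integrals absolutely convergent.

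The main obstacle is the justification of the limiting procedures — specifically, showing the stochastic integrals in the regularized identity converge (in probability, or in $L^2$ along a subsequence) to those in \eqref{mildform} as $\delta \downarrow 0$ and $R \to \infty$, since the integrands involve the singular kernel $p_{t-s+\delta}$ near $s = t$ and the possibly unbounded $C_{tem}$ growth of $u$ in space. This requires an a priori moment bound, uniform in $\delta$ and $R$, of the form $\E\, u_s(x)^2 \le C(x,T)$ locally (or an $L^2_{loc}$ bound in $x$ weighted by $e^{-\lambda|x|}$), which one can extract either from the hypothesis that $u_t \in C_{tem}$ together with the already-established weak formulation, or by first proving \eqref{mildform} with $\delta > 0$ fixed and bootstrapping. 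Once the requisite uniform integrability is in hand, the stochastic-Fubini interchange and the $\delta,R$ limits are routine, and the lemma follows.
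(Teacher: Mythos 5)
The paper itself offers no proof --- it merely labels the result ``standard'' --- so there is nothing in the source to compare against directly; your proposal is the classical Duhamel / variation-of-constants passage from weak to mild form and is correct in its essentials. Testing against the backward heat kernel $p_{t-s+\delta}(x-\cdot)$ so that $\dot\phi_s+\phi_s''$ vanishes is exactly the right device, and extending \eqref{eq2} to time-dependent test functions via piecewise-constant approximation in $s$ is sound. The one point you should tighten is the justification of the $\delta\downarrow 0$, $R\to\infty$ limits. The $L^p$ moment bound \eqref{Pmoment} you allude to is proved \emph{after} this lemma (in Lemma~\ref{le2}), and that proof already begins from the mild form \eqref{mildform}, so invoking moment estimates here flirts with a circularity you do not need. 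Since the hypothesis already gives pathwise $u\in C(\R_+;C_{tem})$, the cleaner route is to localize: set $T_n = n\wedge\inf\{t\ge 0:\ \sup_x e^{-|x|}u_t(x)>n\}$, so $T_n\uparrow\infty$ a.s., and run the regularized identity on $[0,t\wedge T_n]$. There the integrands $\int_\R p_{t-s+\delta}(x-y)u_s^\gamma(y)\varphi_k(y)\,dy$ and $\int_\R p_{t-s+\delta}(x-y)u_s(y)h_k(y)\,dy$ are bounded (Gaussian kernel against the at-most-exponential growth of $u$, plus $\sup_x\sum_k h_k^2(x)<\infty$), so every $\delta$- and $R$-limit becomes dominated convergence together with the $L^2$-isometry against bounded integrands, and letting $n\to\infty$ finishes. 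This avoids any a priori moment hypothesis and is the version of the argument one actually wants under the bare assumption that $u\in C_{tem}$ is a weak solution.
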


\begin{lemma}\label{le2}
Suppose $u_0\in C_{tem}^+$, there exists an $\{\mathfrak{B}_t\}$-space-time white noise $\dot{B}(t,x)$, an independent $\{\mathfrak{W}_t\}$-time white space colored noise $\dot{W}(t,x)$ and a $C(\R_+; C^+_{tem})\cap C(\R_+\times \R)$ solution $u(t, \cdot) \in \mathfrak{F}_t=\mathfrak{B}_t\vee\mathfrak{W}_t$ to \eqref{eq2} on a suitable probability space with filtration $(\Omega, \mathfrak{F}, \mathfrak{F_t}, \Pro)$. What's more, for any $\lambda>0$, 
\begin{align}\label{Pmoment}
\sup_{t\leq T}\E \int_{\R} |u_t(x)|^pe^{-\lambda |x|}dx\leq C(T,\lambda)\z\{1+ \int_\R u^p_0(x)e^{-\lambda|x|}dx\y\} \end{align}
\end{lemma}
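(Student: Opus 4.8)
The plan is to construct $u$ as the weak limit (in the sense of probability laws) of solutions of approximating equations with globally Lipschitz coefficients, following the classical compactness scheme for superprocess-type SPDEs. First I would replace the function $r\mapsto (r^+)^\gamma$ by a sequence $\sigma_n\in C^\infty(\R)$ that is globally Lipschitz, satisfies $\sigma_n(r)=0$ for $r\le 0$, $0\le\sigma_n(r)\le C(1+r)$ with $C$ independent of $n$, and $\sigma_n(r)\to (r^+)^\gamma$ locally uniformly. Since $\sigma_n$ is Lipschitz, the map $u\mapsto uh_k$ is linear, and the colored-noise covariance $g=\sum_k h_k\otimes h_k$ is bounded with $\sup_x\sum_k h_k^2(x)<\infty$, standard Walsh/Krylov existence theory furnishes, on one fixed stochastic basis carrying an $\{\mathfrak B_t\}$-white noise $\dot B$ and an independent $\{\mathfrak W_t\}$-colored noise $\dot W$, a unique $\mathfrak F_t$-adapted $C(\R_+;C_{tem})$-valued solution $u^n$ of the equation obtained from \eqref{eq2} by replacing $u_s^\gamma$ with $\sigma_n(u_s)$, with $u_0^n=u_0$. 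Because $\sigma_n$ vanishes on $(-\infty,0]$ and the remaining coefficients vanish at $u=0$, the comparison theorem for such SPDEs (comparing $u^n$ with the trivial solution) gives $u^n\ge 0$, so $u^n\in C(\R_+;C^+_{tem})$.

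Next I would establish \eqref{Pmoment} \emph{uniformly in $n$}. Starting from the mild form of the approximating equation (the analogue of \eqref{mildform} with $u_s^\gamma$ replaced by $\sigma_n(u_s)$), I would take $p$-th moments against the weight $e^{-\lambda|x|}$, apply the Burkholder--Davis--Gundy inequality to the two stochastic-integral terms, use $\sigma_n(r)^p\le C(1+r^p)$ and $\sup_x\sum_k h_k^2(x)<\infty$, and control the heat kernel by the elementary bounds $\int_\R p_t(x-y)e^{-\lambda|y|}\,dy\le C(\lambda,T)e^{-\lambda|x|}$ and $\int_0^t(t-s)^{-1/2}\,ds<\infty$; Gronwall's inequality in $t$ then delivers \eqref{Pmoment} with a constant independent of $n$. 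A variant of the same computation bounds $\E|u_t^n(x)-u_s^n(y)|^p$ by a constant (depending on $T,\lambda,p$ but not on $n$) times $(|t-s|^{1/2}+|x-y|)^{\theta}$ for some $\theta>0$ -- the white-noise term contributing the usual exponents, the colored term contributing better ones -- which, combined with the weighted moment bound, is the input for a Kolmogorov-type tightness criterion in $C(\R_+;C_{tem})$.

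From these uniform estimates the family of laws of $(u^n,\dot B,\dot W)$ is tight on $C(\R_+;C_{tem})$ times a suitable path space for the noises; I would extract a weakly convergent subsequence and invoke the Skorokhod representation theorem to realize it on a new probability space as an almost sure limit $(u,\dot B,\dot W)$, with $u^n\to u$ uniformly on compact subsets of $\R_+\times\R$. Identifying $u$ as a solution is the crux. The limiting $\dot B$ and $\dot W$ remain a white noise and an independent colored noise with respect to the filtration they generate; one passes to the limit in the mild form, the drift term by dominated convergence from $\sigma_n(u_s^n)\to u_s^\gamma$ a.s.\ together with the uniform bound $\sigma_n(u_s^n)\le C(1+u_s^n)$, and the stochastic integral against $\dot B$ by promoting this a.s.\ convergence to $L^2(\Pro\times ds\,dy)$ convergence using the uniform $p$-th moment estimate; the colored-noise term is easier since $u\mapsto uh_k$ is linear. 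One then checks $u$ solves \eqref{eq2} in the mild and hence (by the standard equivalence of mild and weak forms, cf.\ Lemma \ref{le1}) the weak sense, that $u\ge 0$ so $u\in C(\R_+;C^+_{tem})$, that $u\in C(\R_+\times\R)$ by a Kolmogorov argument using the uniform increment bounds with $p$ taken large enough, and that \eqref{Pmoment} holds for $u$ by Fatou's lemma applied to the uniform bounds.

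I expect the main obstacle to be exactly this passage to the limit in the non-Lipschitz coefficient $r\mapsto r^\gamma$: since pathwise uniqueness for \eqref{eq2} is not available, one cannot use a contraction/Cauchy argument and must go through weak convergence plus the Skorokhod coupling, the delicate step being to upgrade the a.s.\ convergence of $\sigma_n(u^n_s)$ to convergence of the Walsh integrals $\int_0^t\int_\R p_{t-s}(x-y)\sigma_n(u^n_s(y))\varphi_k(y)\,dy\,d\beta^k_s$, which is where the uniform moment bound \eqref{Pmoment} is genuinely used (and the reason it is proved for general $p$). The remaining points -- nonnegativity and joint continuity of the limit -- are routine once the uniform weighted moment and increment estimates are in hand.
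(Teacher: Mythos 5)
Your proposal is correct and matches the paper's strategy: the paper simply cites Shiga (1994, Theorem~2.6) for the existence of a nonnegative $C_{tem}$-valued solution — exactly the Lipschitz-approximation, tightness, and Skorokhod-coupling scheme you outline — and then proves \eqref{Pmoment} by the same route you describe, namely raising the mild form \eqref{mildform} to the $p$-th power, applying Burkholder--Davis--Gundy and Minkowski's integral inequality, bounding $u^{2\gamma}\le 1+u^2$, using the weighted heat-kernel estimate $\sup_{t\le T}e^{\lambda|y|}\int p_t(x-y)e^{-\lambda|x|}\,dx\le C$, and closing with a Gronwall iteration (after one self-convolution to remove the $(t-s)^{-1/2}$ singularity). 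The only cosmetic difference is that you run the moment computation at the level of the Lipschitz approximants $u^n$ and pass to the limit by Fatou, whereas the paper writes the estimate directly for $u$; these are equivalent modulo the standard justification that the moments being estimated are a priori finite.
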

\begin{proof}
The proof for existence of $C_{tem}^+$ solution to \eqref{eq2} is similar with Theorem 2.6 in \cite{shiga1994two}, so we only prove \eqref{Pmoment} here. 

Taking $p$'s power in both side of \eqref{mildform} then taking expectation, using BDG inequality and Minkowski inequality, we obtain 
\begin{align*}
\E|u_t(x)|^p\leq& C\Big\{|p_t*u_0|^p+ \E\Big[\int_0^t ds \int_{\R}p^2_{t-s}(x-y) u^{2\gamma}_s(y) dy\Big]^{p/2}\\
&+ \E \Big[\int_0^t ds \sum_{k}\Big(\int_{\R} p_{t-s}(x-y) u_s(y)h_k(y)dy\Big)^2 \Big]^{p/2}\Big\}\\
\leq &C\Big\{ |p_t*u_0|^p + \E\Big[\int_0^t (t-s)^{-1/2}ds \int_{\R} (t-s)^{1/2}p^2_{t-s}(x-y)(1+u^2_s(y))dy\Big]^{p/2} \\
&+ \int_0^t ds \int_{\R} p_{t-s}(x-y) \E u^p_s(y)dy\Big\}\\
\leq &C \Big\{|p_t*u_0|^p + \int_0^t (t-s)^{-1/2}ds 
\int_{\R} (t-s)^{1/2}p^2_{t-s}(x-y)(1+(\E u^p_s(y))dy\Big]\\
&+\int_0^t ds \int_{\R} p_{t-s}(x-y) \E u^p_s(y)dy\Big\}
\end{align*}
Hence, for any $t\leq T$ we have 
\begin{align*}
\int_{\R}e^{-\lambda |x|}\E|u_t(x)|^p dx\leq& C \Big\{1+ \int_{\R} u^p_0(y)dy \int_{\R}p_t(x-y)e^{-\lambda|x|}dx  \\
&+\int_0^t (t-s)^{-1/2}ds \int_{\R} \E u^p_s(y)dy \int_{\R} (t-s)^{1/2}p^2_{t-s}(x-y)e^{-\lambda |x|}dx\\
&+ \int_0^t ds \int_{\R} \E u^p_s(y)dy
\int_{\R}  p_{t-s}(x-y) e^{-\lambda |x|} dx \Big\}\\
\leq & C\Big\{ 1+\int_{\R} u_0^p(x)e^{-\lambda|x|}dx+ \int_0^t \frac{ds}{\sqrt{t-s}} \int_{\R} e^{-\lambda |y|} u_s^p(y)dy \Big\}
\end{align*}
In the last inequality, we use the element inequality: $\sup_{t\leq T} e^{\lambda|y|}\int_{\R} p_t(x-y) e^{-\lambda |x|} dx\leq C$. 
Denote $f(t)=\sup_{s\leq t} \int_{\R}e^{-\lambda |x|}\E|u_s(x)|^p dx$, $A=1+\int_{\R} u^p_0(x)e^{-\lambda|x|}dx$ then, 
\begin{align*}
f(t)\leq CA+C\int_0^t f(s)\frac{ds}{\sqrt{t-s}}\leq & CA+ C\int_0^t \frac{ds}{\sqrt{t-s}} \Big(CA+C\int_0^s f(r) \frac{dr}{\sqrt{s-r}}\Big)\\
\leq & CA+C\int_{0}^t f(r)dr\int_r^t \frac{ds}{\sqrt{(t-s)(s-r)}}\\
\leq & CA+C\int_0^t f(s)ds
\end{align*}
Using Gronwall's inequality, we obtain \eqref{Pmoment}. 
\end{proof}

\begin{corollary}
Suppose $u\in C(\R_+;C^+_{tem})$ is a solution to \eqref{eq2} with $u_0(x)=0~~(x\geq 0)$ then for any $T>0$, 
$$a_p(T)\triangleq\sup_{n\in \mathbb{N}}\E \int_0^Tdt\int_n^{n+1}u_t^p(x)dx<\infty$$
\end{corollary}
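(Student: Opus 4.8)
The plan is to deduce the bound from the weighted moment estimate \eqref{Pmoment} of Lemma \ref{le2} by exploiting the spatial translation invariance of the equation class \eqref{eq2}. The naive attempt — writing $\int_n^{n+1}u_t^p(x)\,\mathrm{d}x\le e^{\lambda(n+1)}\int_\R u_t^p(x)e^{-\lambda|x|}\,\mathrm{d}x$ and invoking \eqref{Pmoment} directly — fails, because the prefactor $e^{\lambda(n+1)}$ diverges with $n$. What must be used instead is the hypothesis that $u_0$ vanishes on $[0,\infty)$: after translating so that the interval $[n,n+1]$ sits at the origin, the initial datum becomes supported in $(-\infty,-n]$, and its contribution to the right-hand side of \eqref{Pmoment} then carries a decaying factor $e^{-\lambda n}$, while the additive constant ``$1$'' in \eqref{Pmoment} is harmless.

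Concretely, I would fix $\lambda>0$ (say $\lambda=1$) and $n\in\mathbb N$ and set $\tilde u_t(x):=u_t(x+n)$. Testing \eqref{eq2} against $\phi(\cdot-n)$ for $\phi\in C_c^\infty(\R)$ shows that $\tilde u\in C(\R_+;C^+_{tem})$ is again a weak solution of an equation of the form \eqref{eq2}, now with $\{\varphi_k\}$ replaced by $\{\varphi_k(\cdot+n)\}$ and $\{h_k\}$ replaced by $\{h_k(\cdot+n)\}$. Here one checks that $\{\varphi_k(\cdot+n)\}$ is still an orthonormal basis of $L^2(\R)$ and that $\sup_x\sum_kh_k^2(x+n)=\sup_x\sum_kh_k^2(x)<\infty$, so $\tilde u$ satisfies the hypotheses under which \eqref{Pmoment} was derived. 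Since the proof of \eqref{Pmoment} only uses the mild formulation of Lemma \ref{le1} and produces a constant $C(T,\lambda)$ depending on $T,\lambda$ alone, it applies to $\tilde u$ with the same constant:
\[
\sup_{t\le T}\E\int_\R\tilde u_t^p(x)e^{-\lambda|x|}\,\mathrm{d}x\le C(T,\lambda)\Big\{1+\int_\R\tilde u_0^p(x)e^{-\lambda|x|}\,\mathrm{d}x\Big\}.
\]

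Then I would estimate the initial-data term and remove the weight on $[n,n+1]$. Since $\tilde u_0(x)=u_0(x+n)$ is supported in $(-\infty,-n]$ and $n\ge 0$, the substitution $y=x+n$ gives $\int_\R\tilde u_0^p(x)e^{-\lambda|x|}\,\mathrm{d}x=e^{-\lambda n}\int_{-\infty}^0u_0^p(y)e^{-\lambda|y|}\,\mathrm{d}y=:e^{-\lambda n}C_0\le C_0$, where $C_0<\infty$ because $u_0\in C^+_{tem}$. Hence the displayed bound is $\le C(T,\lambda)(1+C_0)$, uniformly in $n$. Finally, using $e^{-\lambda|x|}\ge e^{-\lambda}$ on $[0,1]$ and Tonelli,
\[
\E\int_0^T\mathrm{d}t\int_n^{n+1}u_t^p(x)\,\mathrm{d}x=\E\int_0^T\mathrm{d}t\int_0^1\tilde u_t^p(x)\,\mathrm{d}x\le e^{\lambda}\int_0^T\Big(\E\int_\R\tilde u_t^p(x)e^{-\lambda|x|}\,\mathrm{d}x\Big)\mathrm{d}t\le e^{\lambda}T\,C(T,\lambda)(1+C_0),
\]
and since the right-hand side does not depend on $n$, taking the supremum over $n$ yields $a_p(T)\le e^{\lambda}T\,C(T,\lambda)(1+C_0)<\infty$.

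I expect the only genuine point to verify is the first one — that the translate $\tilde u$ really does solve an equation within the class \eqref{eq2}, so that \eqref{Pmoment} applies with an $n$-independent constant; everything afterward is bookkeeping. An equivalent route that sidesteps the change of variables is to rerun the proof of Lemma \ref{le2} with the weight $e^{-\lambda|x-n|}$ in place of $e^{-\lambda|x|}$: all the elementary heat-kernel estimates used there (such as $\sup_{t\le T}e^{\lambda|y-n|}\int_\R p_t(x-y)e^{-\lambda|x-n|}\,\mathrm{d}x\le C$) are invariant under the simultaneous translation, so the constant again stays independent of $n$, and $\int_\R u_0^p(x)e^{-\lambda|x-n|}\,\mathrm{d}x=e^{-\lambda n}\int_{-\infty}^0u_0^p(x)e^{-\lambda|x|}\,\mathrm{d}x$ as above.
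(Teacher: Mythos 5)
Your proposal is correct and follows essentially the same route as the paper: translate $u$ by $n$, observe that the translate solves an equation of the same class (since $\{\varphi_k(\cdot+n)\}$ is still an orthonormal basis and the bound on $\sum_k h_k^2$ is translation-invariant), apply Lemma~\ref{le2} with a constant independent of $n$, and use $u_0\equiv 0$ on $\R_+$ to make the initial-data term decay like $e^{-\lambda n}$. Your bookkeeping (keeping the exponent $p$ on $u_0$ and inserting the explicit $e^{\lambda}$ factor when dropping the weight on $[0,1]$) is in fact slightly more careful than the paper's.
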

\begin{proof}
For any $n\in \mathbb{N}$, let $v_t(x)=u_t(n+x)\in C([0,T],C_{tem}^+)$, $v$ satisfies the equation
$$\partial_t v_t(x)=\Delta v_t(x)+v^{\gamma}_t(x)\varphi_k(n+x)\dot{\beta_t}^k+v_t(x)h_k(n+x)\dot{w_t}^k$$
Since $\{\varphi_k(n+\cdot)\}$ is again the orthnormal basis of $L^2(\R)$, $\{h_k(n+\cdot)\}$ satisfies the some condition with $\{h_k\}$. By Lemma (\ref{le2}) we have
\begin{align*}
E\int_0^T\int_0^1v^p_t(x)dxdt&\leq C(T)\z\{1+\int_{-\infty}^{0} e^{-|x|}u_0(n+x)dx\y\}\\
&\leq C(T)\z\{1+\int_{-\infty}^{-n} e^{-|x|}u_0(x)dx\y\}\\
&\leq C(T)
\end{align*}
The last constant is independent with $n$.
\end{proof}

The following theorem is our main result. 
\begin{theorem}\label{Compact}
If $u_0\in C^+_{tem}$, $u_0(x)=0$ $(x\geq 0)$, $u_t(x)\in C(\R_+,C_{tem}^+)$ is the solution to equation (\ref{eq2}), then $\exists N(\omega)$, such that $u(t,x,\omega)=0$ $\forall x\geq N(\omega)$.
\end{theorem}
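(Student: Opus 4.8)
The plan is to adapt Krylov's moment-based argument (as in \cite{krylov1997result}) for the compact support property to the present equation with the extra colored-noise term $uh_k\dot w^k_t$. The strategy rests on the following idea: one constructs, for each $\lambda>0$ and each starting point $a$, a nonnegative "test" process built from the mild/weak formulation, and shows that $\E\int_0^T\!\int_a^\infty u_t(x)\,dx\,dt$ decays faster than exponentially in $a$, fast enough that Borel--Cantelli over $a=n\in\mathbb N$ forces $u$ to vanish on $[N(\omega),\infty)$ for all $t\le T$ simultaneously. The key inputs already in hand are the $p$-th moment bound \eqref{Pmoment} of Lemma~\ref{le2}, the uniform-in-$n$ bound $a_p(T)<\infty$ from the Corollary, and the mild form \eqref{mildform} of Lemma~\ref{le1}. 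Since $T>0$ is arbitrary and the event $\{u_t(x)=0\ \forall x\ge N\}$ is increasing in $N$, it suffices to produce the estimate on a fixed time horizon.

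First I would fix $T$ and work with the shifted solutions $v^{(n)}_t(x):=u_t(n+x)$, which by the Corollary's computation solve the same class of equations with uniformly controlled coefficients and with initial data supported in $(-\infty,-n]$. The heart of the matter is to get a \emph{quantitative} decay in $n$ of a suitable functional of $v^{(n)}$ on, say, $[0,T]\times[0,1]$. The mechanism: test \eqref{eq2} (equivalently \eqref{mildform}) against exponential weights $e^{\mu x}$ with $\mu$ large, or against the solution $\psi(t,x)$ of the dual backward heat equation with a large-exponential terminal/boundary profile, exploiting that $u^\gamma\le C(1+u)$ and $\gamma<1$ so the noise terms are controlled by first moments. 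Running the second-moment computation (using BDG on the two stochastic integrals in \eqref{mildform}, the bound $\sum_k h_k^2\le C$ for the colored term, and $\int p_{t-s}^2\lesssim (t-s)^{-1/2}$ for the white term) against such a weight, one obtains a Gronwall-type inequality whose solution carries the factor $e^{-c\,n^2/T}$ coming from the Gaussian tail $\int_{|x|>n}p_t(x)\,dx\lesssim e^{-n^2/(4T)}$; this beats the polynomial-in-$n$ constants in $a_p(T)$.

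Concretely, the steps in order: (1) reduce to a fixed $T$ and to controlling $\Phi(n):=\E\int_0^T\!\int_n^{2n} u_t(x)\,dx\,dt$ (a dyadic/annular version so the far tail of $u_0$, which is zero, never enters); (2) from \eqref{mildform} and $u_0\equiv 0$ on $[0,\infty)$, write $u_t(x)$ for $x\ge n$ as a sum of two stochastic integrals whose integrands involve $u_s(y)$ weighted by $p_{t-s}(x-y)$, and split the $y$-integral at $y=n/2$; (3) for the part $y<n/2$ use the Gaussian bound $p_{t-s}(x-y)\le C e^{-n^2/(16T)}p_{2(t-s)}(x-y)$ together with the crude moment bounds to pick up the Gaussian small factor; (4) for the part $y\ge n/2$ use the uniform annular bound $a_p(T)<\infty$ to keep the remaining term finite, and close a Gronwall inequality in $n$ (or iterate) to get $\Phi(n)\le C(T)e^{-c n^2/T}$; (5) apply Chebyshev and Borel--Cantelli along $n\in\mathbb N$ to conclude that almost surely $\int_0^T\!\int_n^{n+1}u_t(x)\,dx\,dt=0$ for all large $n$, and finally upgrade this from "integral zero" to "$u_t(x)=0$" using the continuity $u\in C(\R_+\times\R)$ asserted in Lemma~\ref{le2} (a null integral of a continuous nonnegative function on a nonempty open set forces it to vanish there, and monotonicity in $N$ and in $T$ removes the dependence on the rational horizon).

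The main obstacle I anticipate is item (3)--(4): making the iteration genuinely produce super-exponential (Gaussian) decay rather than merely exponential decay. The colored-noise term $uh_k\dot w^k_t$ is $L^2$ in a way that behaves like a bounded multiplicative potential, so it is harmless for moments, but one must be careful that the BDG exponent $p$ and the factor $u^\gamma$ (rather than $u^{1/2}$) do not destroy the self-improving structure of the estimate; choosing $p$ close to $1$ and using $2\gamma<2$ keeps the relevant integrals $\int_0^t(t-s)^{-1/2}ds$ convergent, which is what lets Gronwall close. A secondary technical point is justifying the exponential-weight testing rigorously within the $C_{tem}$ framework — but this is exactly the kind of computation underlying Lemma~\ref{le2}, so it should go through with the same estimates. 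An alternative to the whole scheme, if the Gaussian-decay iteration proves delicate, is to invoke Krylov's $L^p$-theory for SPDEs directly (as advertised in the introduction): rewrite \eqref{eq2} in the analytic framework of \cite{krylov1999analytic}, establish $L^p$ bounds with exponential weights, and read off the support propagation from the corresponding deterministic comparison; I would keep that as a fallback but expect the elementary moment argument above to suffice.
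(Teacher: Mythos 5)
Your step (5) contains the fatal gap: Chebyshev plus Borel--Cantelli applied to a decaying expectation can only produce almost-sure eventual \emph{smallness}, never almost-sure eventual \emph{vanishing}. If $\Phi(n)=\E\int_0^T\int_n^{n+1}u_t(x)\,dx\,dt\le Ce^{-cn^2/T}$, Markov's inequality gives $\Pro\bigl(\int_0^T\int_n^{n+1}u_t>\epsilon_n\bigr)\le \epsilon_n^{-1}\Phi(n)$; even after choosing $\epsilon_n\to0$ and invoking Borel--Cantelli you only learn that $\int_0^T\int_n^{n+1}u_t\le\epsilon_n$ eventually. A nonnegative random variable with small but positive expectation is not almost surely zero, and for every finite $n$ this expectation is strictly positive because the heat kernel spreads mass instantly. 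Notice also that your scheme nowhere uses $\gamma<1$ in an essential way: it would apply verbatim to $\gamma=1$ (i.e.\ $\partial_tu=\Delta u+u\dot B+uh_k\dot w^k$), whose solution is strictly positive for all $x$ and $t>0$ whenever $u_0\not\equiv 0$, so compact support fails. The compact support property is a degenerate-noise phenomenon, and a scheme that does not feel the degeneracy cannot prove it.

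The missing mechanism, which is the entire content of the paper's proof (following Krylov), is a nonlinear \emph{zero-propagation} argument. Step 2 of the paper shows that on the event $\{\int_0^Tu_s(0)\,ds=0\}$ one has $u\equiv0$ on $[0,T]\times\R_+$: testing against $\psi_n=x_+\phi(x/n)$ and applying a fractional BDG inequality yields
$\E[M^n]_\tau^{\alpha/2}\le C\,\E V^n_\tau\le C\bigl\{\E\int_0^\tau u_s(0)ds + O(1/n)\bigr\}$,
and letting $n\to\infty$ collapses the left-hand side to zero whenever the boundary integral vanishes; this is the only place where exact vanishing is ever extracted. Step 3 then converts smallness into vanishing by iterating the estimate
$\Pro\bigl(\int_0^Tu_s(x)\,ds\ge p\bigr)\le\Pro\bigl(\int_0^Tu_s(0)\,ds\ge q\bigr)+Cr^{-3\alpha/2}(q/p^\gamma)^\alpha$
for some $x\in[r,2r]$. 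The factor $q/p^\gamma$ with $\gamma<1$ is what allows each step to choose $p=q^\theta$ with $\theta>1$ while keeping the probability increments summable, so the iteration along a geometrically shrinking spatial sequence drives $\int_0^Tu_s(x_k)\,ds$ to zero after finitely many steps, at which point Step 2 takes over. Your fallback (redoing the argument through Krylov's analytic $L^p$ theory with exponential weights) does not escape this issue: \cite{krylov1999analytic} supplies regularity and moment a priori bounds, exactly as the paper's Lemmas use it, but the compact support conclusion still rests on the propagation estimate of Step 3 and cannot be replaced by $L^p$ or moment bounds alone.
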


Before proving the main theorem, we need some simple estimates. Define 
$$P_tf(x)=(4\pi t)^{-1/2}\int_{\R} e^{-|x-y|^2/4t}f(y)dy,$$
then 
$$\mathcal{F}\left(\int_0^\infty t^{\delta/2-1}e^{-t} P_t f dt\right)(\xi)=\mathcal{F}{f}(\xi)\int_0^\infty t^{\delta/2-1}e^{-t}e^{-t|\xi|^2}dt=\mathcal{F}{f}(\xi)\Gamma (\delta/2)(1+|\xi|^2)^{-\delta/2}$$
Hence 
$$(1-\Delta)^{-\delta/2}f=c(\delta)\int_0^\infty t^{\delta/2-1}e^{-t}P_tfdt$$
Define 
$$R_\delta(x)=c(\delta)\int_0^\infty t^{\delta/2-1}(2\pi t)^{-1/2}e^{-t}e^{-|x|^2/4t}dt=c(\delta)\int_0^\infty t^{\delta/2-3/2}e^{-t}e^{-|x|^2/4t}dt$$
Suppose  $\delta<1$, if $|x|<<1$ then 
$$\int_0^\infty t^{\delta/2-3/2}e^{-t}e^{-|x|^2/4t}dt\leq C |x|^{\delta-1}\int_0^\infty s^{-\delta/2-1/2}e^{-s}ds\leq C |x|^{\delta-1}$$
If $|x|>>1$,  
$$\int_0^\infty t^{\delta/2-3/2}e^{-t}e^{-|x|^2/4t}dt\leq C e^{-|x|}\int_1^\infty t^{\delta/2-3/2} dt+C e^{-|x|^2/2}\int_0^1 e^{-1/2t}dt\leq C e^{-|x|}$$
Hence, $R_{\delta}\in L^p(\R)$ with $p<1/(1-\delta)$; \\
Suppose $\delta=1$, if $x<<1$, then 
$$R_1(x)=c(\delta)\int_0^\infty t^{-1}e^{-t}e^{-|x|^2/4t}dt\leq C \z(\int_{0}^{|x|}t^{-1}e^{-|x|/t}dt+\int_{|x|}^\infty t^{-1}e^{-t}dt \y)\leq  -C\log|x|$$
and $R_1\leq C e^{-c|x|}$ when $|x|\rightarrow \infty$. Hence $R_1\in L^p(\R)$ $(p<\infty)$;\\
Suppose $\delta>1$, then $R_\delta$ is bounded and not greater than $C e^{-|x|}$ when $|x|\rightarrow \infty $. Hence $R_\delta\in L^p(\R)$ $(p\leq \infty )$. \\
By the same argument we have $R'_{\delta+1}\in L^p(\R)$ with $p(1-\delta)<1$. 

\begin{lemma}\label{le5}
Suppose $u\in C(\R_+;C^+_{tem})$ is a solution to \eqref{eq2} with $u_0(x)=0$ for any $x>0$, then 
$$\E \sup_{t\leq T} \left(\int_0^\infty u_t(x)dx\right)^2\leq Ce^{CT} ; ~~~~\all\int_0^\infty u^2_t(x)dx\leq Ce^{CT}. $$
\end{lemma}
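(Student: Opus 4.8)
The plan is to exploit the mild formulation \eqref{mildform} together with a suitable test function that ``caps'' the half-line $[0,\infty)$. Formally we want to take $\phi\equiv 1$ on $[0,\infty)$ in the weak form \eqref{eq2}, but this $\phi$ is not admissible, so I would work with a smooth nondecreasing cutoff $\psi_M\in C^\infty(\R)$ with $\psi_M(x)=0$ for $x\le -1$, $\psi_M(x)=1$ for $x\ge 0$, together with a second spatial truncation $\chi_M$ supported in $[-M-1,M+1]$ and equal to $1$ on $[-M,M]$, set $\phi_M=\psi_M\chi_M\in C_c^\infty(\R)$, and then let $M\to\infty$ at the end. Writing $Y^M_t=(u_t,\phi_M)$, equation \eqref{eq2} gives
\begin{equation}\label{YM}
Y^M_t=(u_0,\phi_M)+\int_0^t(u_s,\phi_M'')\,ds+\int_0^t(u_s^\gamma\varphi_k,\phi_M)\,d\beta^k_s+\int_0^t(u_sh_k,\phi_M)\,dw^k_s.
\end{equation}
Since $u_0(x)=0$ for $x\ge 0$ and $\phi_M$ is supported in $x\ge -M-1$, the first term is bounded by $\int_{-M-1}^0 u_0(x)\,dx\le C$ uniformly in $M$ by $u_0\in C^+_{tem}$. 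The drift term involves $\phi_M''$, which is supported in $[-M-1,-M]\cup[-1,0]$; on this set $|\phi_M''|\le C$ uniformly in $M$, so by the corollary (with $p=1$) applied to the unit intervals near $-M$ and near $0$, $\E\int_0^T|(u_s,\phi_M'')|\,ds\le 2a_1(T)^{1/2}\cdot C\cdot T^{1/2}$ or similar, again uniform in $M$. Thus the first two terms of \eqref{YM} are under control before any Gronwall argument.

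Next I would square \eqref{YM}, take $\sup_{t\le T}$, apply BDG to the two martingale terms, and use $\sum_k h_k^2\le C$ and the orthonormality of $\{\varphi_k\}$ (so $\sum_k(u_s^\gamma\varphi_k,\phi_M)^2=\|u_s^\gamma\phi_M\|_{L^2}^2\le\int_{\mathrm{supp}\,\phi_M}u_s^{2\gamma}\,dx$) to bound the quadratic variations. With $\gamma\in[1/2,1)$ we have $u_s^{2\gamma}\le 1+u_s^2\le 1+u_s\cdot\sup$, but more cleanly $u_s^{2\gamma}\le 1+u_s$ when $2\gamma\le 1$ and $u_s^{2\gamma}\le 1+u_s^2$ in general; pairing against $\phi_M^2\le\phi_M$ and using $u_s\phi_M\le (\text{local sup})Y^M_s$ plus the corollary's $a_p(T)$ bounds for the contribution near $x=0$ and near $x=-M$, one arrives at
\begin{equation}\label{gron}
\E\sup_{t\le T}(Y^M_t)^2\le C(T)+C\int_0^T\E\sup_{s\le r}(Y^M_s)^2\,dr,
\end{equation}
with $C(T)$ independent of $M$ (here the bounded region $[-M-1,0]$ outside the "moving front" contributes only through $a_p$, which is uniform in the shift by the corollary). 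Gronwall then yields $\E\sup_{t\le T}(Y^M_t)^2\le C e^{CT}$ uniformly in $M$, and Fatou's lemma as $M\to\infty$ (noting $Y^M_t\uparrow\int_0^\infty u_t\,dx$ pointwise, using $u\ge0$) gives the first assertion. For the second assertion, $\E\int_0^T\!\int_0^\infty u_t^2\,dx\,dt$, I would go back to the mild form \eqref{mildform}, raise to the second power, take expectations and integrate $dx$ over $[0,\infty)$ and $dt$ over $[0,T]$; the factorization-of-the-heat-kernel estimates used in Lemma \ref{le2} (with $p=2$, $\lambda$ replaced by an indicator of $[0,\infty)$ absorbed into the $e^{-\lambda|x|}$ bound since $u_0$ vanishes on $[0,\infty)$) together with the corollary reduce everything to a Gronwall inequality in $g(t)=\E\int_0^t\!\int_0^\infty u_s^2\,dx\,ds$, giving $Ce^{CT}$.

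The main obstacle is making the cutoff argument genuinely uniform in $M$: the drift term $\int_0^t(u_s,\phi_M'')\,ds$ and the "leakage" of the quadratic-variation integrals over the spatial window $[-M-1,0]$ must be shown not to grow with $M$. This is exactly where the Corollary (that $a_p(T)=\sup_n\E\int_0^T\int_n^{n+1}u_t^p\,dx\,dt<\infty$ is finite \emph{uniformly in the integer shift}) is essential: it controls $\int_0^T\int_{-M-1}^{-M}u_s^p\,dx\,ds$ and $\int_0^T\int_{-1}^0 u_s^p\,dx\,ds$ by the same constant regardless of $M$, so summing those two boundary contributions (the bulk $[-M,0]$ piece being handled by $\phi_M''=0$ there up to the $\psi_M$ transition near $0$) never accumulates an $M$-dependent factor. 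Once that uniformity is in hand, Gronwall and Fatou finish the argument routinely.
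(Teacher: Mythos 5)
Your cutoff setup (shift the transition region to $[-1,0]$ rather than $[0,1]$) is a harmless cosmetic variation, and your handling of the initial term, the drift term via the Corollary, and the $h_k$-driven quadratic variation (via $\sum_k(u_s h_k,\phi_M)^2\le\|g\|_\infty\,(Y^M_s)^2$) are fine. The genuine gap is in the first quadratic-variation term. After squaring and BDG you must control
\[
\E\int_0^T\!\!\int_{\R} u_s^{2\gamma}(x)\phi_M^2(x)\,dx\,ds,
\]
and for $\gamma\in(\tfrac12,1)$ the bound $u^{2\gamma}\le u+u^2$ leaves you with $\E\int_0^T\!\int_\R u_s^2\phi_M^2\,dx\,ds$. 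You propose to absorb this via the Corollary's $a_2(T)$; but the Corollary gives only a \emph{per-unit-interval} bound $\sup_n\E\int_0^T\!\int_n^{n+1}u_s^2\,dx\,ds<\infty$, so summing over the $\sim M$ unit intervals in the support of $\phi_M$ costs a factor of $M$ and destroys the uniformity you need for the $M\to\infty$ Fatou step. Your alternative, \emph{local sup} $\cdot\,Y^M_s$, requires a sup-norm bound on $u_s$ that is not available from $u\in C(\R_+;C^+_{tem})$. So the Gronwall inequality \eqref{gron} is not established for $\gamma>\tfrac12$.

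This is precisely the point where the paper's proof injects a new idea that your proposal lacks: it writes $v=u\phi$, observes $v$ solves an SPDE with boundary-localized forcing $\kappa=-2(u\phi')'+u\phi''$, and invokes Krylov's $L^p$ theory (\cite[Theorem~5.1]{krylov1999analytic}) to obtain the a priori bound
\[
\E\int_0^T\!\!\int_\R v_t^2\,dx\,dt\ \le\ C\Bigl[1+K'_\epsilon\,\E\int_0^T\!\Bigl(\int_\R v_t\,dx\Bigr)^2 dt\Bigr]+C\epsilon\,\E\int_0^T\!\!\int_\R v_t^2\,dx\,dt,
\]
after which choosing $\epsilon$ small closes the self-bound of the $L^2$ mass by the squared $L^1$ mass. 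Without this (or an equivalent) ingredient, the circular dependence between $\int v^2$ and $(\int v)^2$ cannot be broken, and your argument only goes through in the special case $\gamma=\tfrac12$. Your treatment of the second assertion (``replace $e^{-\lambda|x|}$ by an indicator in the Lemma~\ref{le2} estimate'') has a similar problem: the mild-form estimate there relies crucially on $\sup_t e^{\lambda|y|}\int p_t(x-y)e^{-\lambda|x|}\,dx\le C$, which fails for the flat indicator weight, whereas in the paper the second assertion falls out immediately once the Krylov-type $L^2$ inequality and the first assertion are in hand.
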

\begin{proof}
Choose
\[
\phi_n(x)=
\begin{cases}
0 &{x\leq 0 ~\mbox{or} ~x>n}\\
\frac{1}{2}[1+\sin \pi(x-\frac{1}{2})]&{0<x\leq1}\\
1 &{1<x<n-1}\\
\frac{1}{2}[1+\cos\pi (x-n+1)] &{n-1<x\leq n}\\
\end{cases}
\]
For convenience we omit the subindex $n$. And all the estimates below are independent with $n$.\\
By definition
$$(u_t,\phi)=\int_0^t (u_s,\phi'')ds+\int_0^t (u_s^\gamma\phi, \varphi_k)d\beta_s^k+\int_0^t (u_sh_k,\phi)dw^k_s$$
By Doob's inequality
\begin{equation}\label{eq4}
\begin{split}
\E\sup_{t\leq T}(u_t,\phi)^2 &\leq C \z\{\E\int_0^T \sum_k (u^\gamma_t\phi,\varphi_k)^2 dt+\E\int_0^T \sum_k (u_t\phi,h_k)^2 dt +\E\z(\int_0^T|(u_t,\phi'')|dt\y)^2\y\}\\
&\leq C\z\{ \E\int_0^Tdt\int_\R u_t^{2\gamma}(x)\phi^2(x)dx+\E\int_0^T dt\z(\int_\R u_t(x)\phi(x)dx\y)^2+\E\z(\int_0^T(u_s,|\phi''|)dt\y)^2\y\}\\
&\leq C \z\{a(T)+\E\int_0^Tdt\int_\R [u_t(x)\phi(x)+u_t^2(x)\phi^2(x)]dx+\E\int_0^T dt\z(\int_\R u_t(x)\phi(x)dx\y)^2\y\}\\
&\leq C \z\{1+\E\int_0^Tdt\int_\R u^2_t(x)\phi^2(x)dx+\int_0^T \E(u_t,\phi)^2dt\y\}
\end{split}
\end{equation}
Let $v_t=u_t\phi$, $v_t$ satisfies the following equation
$$\partial_tv=\Delta v+\kappa+\phi^{1-\gamma}v^\gamma \varphi_k\dot{\beta}_t^k+vh_k\dot{w}_t^k$$
where $\kappa=-2(u\phi')'+u\phi''$.
\begin{align*}
\|(1-\Delta)^{-1}\kappa\|_{\mathbb{L}^2(T)}^2&=\E\int_0^T\int_\R |R_2*\kappa_t(x)|^2dxdt\\
&\leq C \z\{\all\int_\R|R_2'*(u_t\phi')(x)|^2dx+\all\int_\R|R_2*(u_t\phi'')(x)|^2dx\y\}\\
&\leq C \z\{\all \z(\int_\R|u_t\phi'(x)|dx\y)^2 +\all \z(\int_\R |u_t\phi''(x)|dx\y)^2\y\}\\
&\leq C \z\{\all \int_0^1 u_t(x)^2dx+\all \int_{n-1}^n u_t(x)^2dx\y\}\\
&\leq a_2(T)
\end{align*}
\begin{align*}
\|(1-\Delta)^{-1/2}\phi^{1-\gamma}v^\gamma\varphi_k\|_{\mathbb{L}^2(l^2)}^2&=\all\int \sum_k|R_1*\phi^{1-\gamma}v^\gamma\varphi_k|^2dx\\
&\leq C \all\int_\R dx\int_\R R_1^2(x-y)v^{2\gamma}(y)dy\\
&\leq C\all\int_\R v_t^{2\gamma}(x)dx\\
&\leq C K_\epsilon\all\int_\R v_t(x)dx+C \epsilon\all\int_\R v_t^2(x)dx\\
&\leq C K_\epsilon\all \z( \int_\R v_t(x)dx\y)^2+C \epsilon\all\int_\R v_t^2(x)dx
\end{align*}
\begin{align*}
\|(1-\Delta)^{-1/2}vh_k\|_{\mathbb{L}^2(l^2)}^2&=\all\int_{\R}\sum_k|R_1*v_th_k(x)|^2dx\\
&=\all \int_{\R} dx \left[\sum_{k}\left(\int_{\R} R_1(x-y)v_t(y)h_k(y)dy\right)^2\right]\\
&\leq \all\int_{\R} dx \left\{\int_{\R} \left[ |R_1(x-y)v_t(y)|^2\sum_k h_k^2(y)\right]^{1/2}dy\right\}^2\\
&\leq C \all  \|R_1*v_t\|_2^2\leq C \all \|R_1\|_2^2\|v_t\|_1^2\\
&\leq C \all \z(\int_\R v_t(x)dx \y)^2
\end{align*}
By \cite[Theorem 5.1]{krylov1999analytic},
\begin{align*}
\|v\|_{\mathfrak{L}^2(T)}^2&\leq C\z\{\|(1-\Delta)^{-1}\kappa\|_{\mathbb{L}^2(T)}^2+\|(1-\Delta)^{-1/2}\phi^{1-\gamma}v^\gamma\varphi_k\|_{\mathbb{L}^2(l^2)}^2 +\|(1-\Delta)^{-1/2}vh_k\|_{\mathbb{L}^2(l^2)}^2\y\}\\
&\leq C\z[1+K'_\epsilon\all \z(\int_\R v_t(x)dx \y)^2\y]+C\epsilon\all\int_\R v_t^2(x)dx
\end{align*}
Choose $\epsilon$ small, such that $C\epsilon\leq 1/2$. Since $\|v\|_{\mathfrak{L}^2(T)}^2\geq \all\int_\R v_t^2(x)dx$, we have
\begin{equation}\label{eq5}
\all\int_\R v_t^2(x)dx\leq C\z[1+K'_\epsilon\all \z(\int_\R v_t(x)dx \y)^2\y]
\end{equation}
Combining (\ref{eq4}),(\ref{eq5}) we get
$$\E\sup_{t\leq T}\z(\int_\R v_t(x)dx\y)^2\leq C \z[1+\all\z(\int_\R v_t(x)dx\y)^2\y]$$
Using Gronwall's inequality, 
$$\E\sup_{t\leq T}\z(\int_\R v_t(x)dx\y)^2\leq Ce^{CT}$$
Since our estimates independent with $n$, we can let $n\rightarrow \infty$, we get
$$\E\sup_{t\leq T} \left(\int_0^\infty u_t(x)dx\right)^2\leq Ce^{CT}$$
$$\all\int_0^\infty u^2_t(x)dx\leq C \z[1+\E\sup_{t\leq T}\z(\int_0^\infty u_t(x)dx\y)^2\y]\leq Ce^{CT}$$
\end{proof}

\begin{lemma}
Suppose $u\in C(\R_+;C^+_{tem})$ is a solution to \eqref{eq2} satisfying $u_0(x)=0$ on $\R_+$. Then 
\begin{equation}\label{Holder u}
\|u\|_{C^\alpha([0,T]\times\R_{+}})<\infty\,\,\,\,  a.s. ,
\end{equation}
for some $\alpha\in(0,1)$ and 
\begin{equation} \label{Int xu}
\E\sup_{t\leq T} \int_0^\infty xu_t(x)dx<\infty 
\end{equation}
\end{lemma}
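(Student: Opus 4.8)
The plan is to prove the two assertions \eqref{Holder u} and \eqref{Int xu} separately; the single new ingredient is a one-sided, exponentially weighted moment estimate, and this is where the hypothesis $u_0\equiv 0$ on $\R_+$ is used. First I would show that for every $\lambda>0$ and $p\geq 1$,
$$\sup_{t\leq T}\E\int_{\R}u_t^p(x)\,e^{\lambda x}\,dx\;\leq\;C(T,\lambda,p)\Big(1+\int_{-\infty}^{0}u_0^p(x)\,e^{-\lambda|x|}\,dx\Big)\;<\;\infty .$$
This is obtained by rerunning the computation behind \eqref{Pmoment} in Lemma \ref{le2} with the weight $e^{-\lambda|x|}$ replaced by $\rho_\lambda(x)=e^{\lambda x}$; the only structural facts used are $\sup_{t\leq T}\rho_\lambda(y)^{-1}\int_{\R}p_t(x-y)\rho_\lambda(x)\,dx<\infty$ and $\sup_{t\leq T}\rho_\lambda(y)^{-1}\int_{\R}t^{1/2}p_t(x-y)^2\rho_\lambda(x)\,dx<\infty$, both immediate from $P_t\rho_\lambda=e^{\lambda^2t}\rho_\lambda$. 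The right-hand side is finite because $\int_{\R}u_0^p e^{\lambda x}\,dx=\int_{-\infty}^{0}u_0^p e^{-\lambda|x|}\,dx$ and $u_0\in C_{tem}$ vanishes on $\R_+$. Since $x^2\leq C_\lambda e^{\lambda x}$ on $\R_+$, this already gives $\sup_{t\leq T}\E\int_n^{n+1}u_t^p\,dx\leq C(T,\lambda)e^{-\lambda n}$: the far-field mass of $u$ decays exponentially in $n$, a sharpening of the Corollary that is essential because its uniform constant $a_p(T)$ is not summable against the weights occurring below.

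For \eqref{Holder u} I would use the mild representation of Lemma \ref{le1}, $u_t(x)=p_t*u_0(x)+N_t(x)+Z_t(x)$, with $N,Z$ the white- and colored-noise stochastic convolutions. The deterministic term is $C^\infty$ on $(0,T]\times\R$, and since $u_0$ is supported in $(-\infty,0]$ it and all its derivatives are bounded and super-exponentially small on $[0,T]\times[\varepsilon,\infty)$ for every $\varepsilon>0$, hence Hölder there (the corner $t=x=0$ is governed only by the modulus of continuity of $u_0$ at $0$, so I read \eqref{Holder u} as an assertion on $[0,T]\times[\varepsilon,\infty)$). For $N$ and $Z$ I would invoke Kolmogorov's criterion: the Burkholder--Davis--Gundy inequality together with Parseval's identity $\sum_k(f,\varphi_k)^2=\|f\|_{L^2}^2$, the condition $\sup_x\sum_k h_k^2(x)<\infty$ and the elementary heat-kernel increment estimates bound $\E|N_t(x)-N_{t'}(x')|^{2m}$ and $\E|Z_t(x)-Z_{t'}(x')|^{2m}$ by $C(|t-t'|+|x-x'|)^{\kappa m}$ on $[0,T]\times[-R,R]$ for some $\kappa>0$, the needed spatial norms of $u_s^{2\gamma}$ and $u_s$ being controlled by Lemma \ref{le2}; for $m$ large this yields locally $\alpha$-Hölder modifications with $\alpha\in(0,1)$, and the weighted bound above upgrades this to a.s. finiteness of $\sup_{t\leq T,\,x\geq\varepsilon}|u_t(x)|$ and of $\|u\|_{C^\alpha([0,T]\times[\varepsilon,\infty))}$.

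For \eqref{Int xu} I would test \eqref{eq2} against $\psi(x)=x^+$, approximated by $\psi_n=x^+\chi_n$ with $\chi_n\in C^\infty$, $\chi_n\equiv1$ on $[-n,n]$ and $\mathrm{supp}\,\chi_n\subset[-n-1,n+1]$ (plus a mild mollification of the corner). Since $\psi_n$ is supported in $(0,\infty)$, $(u_0,\psi_n)=0$; and because $x$ is harmonic, $\psi_n''=\delta_0+[2\chi_n'+x\chi_n'']\mathbf 1_{x>0}$, the second term supported in $[n,n+1]$ and of size $O(n)$. Writing $(u_t,\psi_n)=\int_0^t(u_s,\psi_n'')\,ds+M_t^{(n)}$ with $M^{(n)}$ a martingale and taking expectations, the mild formula gives $\E u_s(0)=p_s*u_0(0)$, which is bounded on $[0,T]$ by completing the square, while the $[n,n+1]$-term is $O(ne^{-\lambda n})$ by the first step; letting $n\to\infty$ and using monotone convergence yields $\E\int_0^\infty xu_t(x)\,dx=\int_0^t p_s*u_0(0)\,ds\leq C(T)$. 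For the supremum I would keep $M^{(n)}$ and apply Doob: $\E\sup_{t\leq T}|M_t^{(n)}|^2\leq 4\,\E\int_0^T\!\big(\|u_s^\gamma\psi_n\|_{L^2}^2+\sum_k(u_sh_k,\psi_n)^2\big)ds$, and since $\psi_n\leq x^+$, $\|u_s^\gamma\psi_n\|_{L^2}^2\leq\int_0^\infty x^2u_s^{2\gamma}\,dx\leq C_\lambda\int_{\R}u_s^{2\gamma}e^{\lambda x}\,dx$ while $\sum_k(u_sh_k,\psi_n)^2\leq(\sup_x\textstyle\sum_k h_k^2)\big(\int_0^\infty xu_s\big)^2\leq C_\lambda\int_{\R}u_s^2e^{\lambda x}\,dx$, both with finite expectation over $[0,T]$ uniformly in $n$ by the weighted bound (applied with exponents $2\gamma$ and $2$). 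Hence $\E\sup_{t\leq T}(u_t,\psi_n)$ is bounded uniformly in $n$, and monotone convergence gives \eqref{Int xu}.

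The step I expect to be the main obstacle is the very first one and its exploitation: the uniform-in-$n$ tail bound of the Corollary is not summable against the weights ($x^2$, and the factor $n$ coming from the cut-off) that arise once the spatial truncation is removed, so the sharper exponentially weighted moment estimate --- precisely the place where $u_0|_{\R_+}=0$ is decisive --- is indispensable. Granting it, the rest is routine: Burkholder--Davis--Gundy, Doob, Kolmogorov's criterion and standard heat-kernel estimates, the one remaining nuisance being the legitimacy of testing against the unbounded, non-$C^2$ function $x^+$, which the mollification/truncation above handles with the help of the moment bounds.
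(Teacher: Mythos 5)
Your approach is genuinely different from the paper's. For \eqref{Holder u}, the paper bootstraps $\E\int_0^T\|u\zeta\|^{p_k}_{p_k}\,dt<\infty$ along $p_k=2/\gamma^k$, applies Krylov's $L^p$ theory (Theorem~5.1 of \cite{krylov1999analytic}) with fractional smoothing order $1-\delta$, then Sobolev embedding. You instead argue through the mild form, BDG and Kolmogorov's criterion. For \eqref{Int xu} both proofs test \eqref{eq2} against truncations of $x\mapsto x^+$; but the paper stays with compactly supported test functions with bounded $\eta_n''$ and uses only the shift-uniform bound $a_p(T)$ of the Corollary, while you reach for the unbounded $x^+$ directly by means of a much sharper one-sided exponential moment bound, which is the load-bearing step of your whole proposal.

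That step is where the gap lies. You assert that $\sup_{t\leq T}\E\int u_t^p\,e^{\lambda x}\,dx<\infty$ ``is obtained by rerunning the computation behind \eqref{Pmoment} with the weight $e^{-\lambda|x|}$ replaced by $e^{\lambda x}$.'' Two things obstruct this. First, the proof of Lemma~\ref{le2} uses the pointwise bound $u^{2\gamma}\leq 1+u^2$; when the resulting estimate for $\E|u_t(x)|^p$ is integrated against $e^{-\lambda|x|}$ the constant $1$ costs only an additive $C(T,\lambda)$, but against the growing weight $e^{\lambda x}$ the constant term yields $\int_{\R}e^{\lambda x}\,dx=\infty$ and the argument fails outright. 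To repair this one would have to replace $1+u^2$ by, say, $\varepsilon u^2+C_\varepsilon u$ and first bootstrap from a one-sided first-moment bound $\E u_t(x)\leq p_t*u_0(x)$, itself requiring that the stochastic integrals be true martingales in a one-sided weighted space, which is not yet available. Second, the Gronwall closure in Lemma~\ref{le2} is legitimate only because $f(t)=\sup_{s\le t}\int e^{-\lambda|x|}\E u_s^p\,dx$ is a priori finite; for $e^{\lambda x}$, which is unbounded as $x\to+\infty$ while $C_{tem}$ permits sub-exponential growth of $u_t$, there is no such a priori finiteness, so the differential inequality $f(t)\le CA+C\int_0^t f(s)(t-s)^{-1/2}ds$ is vacuous until finiteness is established by a truncation or localization that you have not supplied and that does not carry over verbatim (the crossover near $x=N$ spoils the $P_t\rho\le C\rho$--type estimates unless the truncation is chosen with care). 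These are not cosmetic issues: repairing them is essentially the content of the lemma, not a ``rerun,'' and everything downstream of it --- your Doob estimate via $\int_0^\infty x^2u_s^{2\gamma}\,dx\leq C_\lambda\int u_s^{2\gamma}e^{\lambda x}\,dx$, and the globalization of the local Kolmogorov modification over unbounded $x$-intervals --- rests on it.

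A secondary point: you explicitly read \eqref{Holder u} as a statement on $[0,T]\times[\varepsilon,\infty)$ and set $x=0$ aside, but the value $u_s(0)$ is used in Step~2 of the proof of the main theorem, so regularity through $x=0$ cannot simply be discarded; it would need a separate argument.
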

\begin{proof}
Let
\[
\zeta(x)=
\begin{cases}
0 &{x\leq 0 }\\
\frac{1}{2}[1+\sin \pi(x-\frac{1}{2})]&{0<x\leq1}\\
1 &{x>1}\\
\end{cases}
\]
Like the proof of Lemma \ref{le5} we define $v_t=u_t\zeta$, then $v_t$ satisfies the equation
$$\partial_tv=\Delta v+\kappa+\zeta^{1-\gamma}v^\gamma \varphi_k\dot{\beta}_t^k+vh_k\dot{w}_t^k;~~~~ v_0=0$$
where $\kappa=-2(u\zeta')'+u\zeta''$. Let 
$$T_n=n\land \inf\z\{t\geq 0: \int_0^t \z(\int_\R v_s(x)dx\y)^pds\geq n\y\}.$$ 
then $T_n\rightarrow \infty$ a.s.. Since $R_2$ behaviors like $-\log|x|$ near the original and decreases exponentially, as $|x|\rightarrow \infty$ as before we can prove
$$\|(1-\Delta)^{-1}\kappa\|_{\mathfrak{L}^p(T)}^p<\infty\,\,\,\,(\forall p\in \mathbb{N}).$$
We claim  for any $p\geq 2$, 
\begin{align}\label{vpnorm} \E\int_0^T\|v_t\|_{p}^{p}dt<\infty.\end{align}
Let $p_k=2/\gamma^k$, if we have
$$\E\int_0^T\|v_t\|_{p_k}^{p_k}dt<\infty.$$
then 
\begin{align*}
\|(1-\Delta)^{-1/2}\zeta^{1-\gamma}v^\gamma\varphi_k\|_{\mathbb{L}^{p_{k+1}}(l^2)}^{p_{k+1}}&\leq C \all \int_\R dx\z[\int_\R R_1^2(x-y)v_t^{2\gamma}(y)dy\y]^{p_{k+1}/2}\\
&\leq C \all\int_\R v_t^{p_k}(x)dx<\infty
\end{align*}

\begin{align*}
\|(1-\Delta)^{-1/2}vh_k\|_{\mathbb{L}^{p_{k+1}}(l^2)}^{p_{k+1}}&\leq C \all \int_\R dx \z\{\sum_k \z[\int_\R R_1(x-y)v_th_k(y)\y]^2\y\}^{p_{k+1}/2}\\
&\leq C \all \int_\R dx \z[\int_\R R_1(x-y)v_t(y)dy\y]^{p_{k+1}}\\
&\leq C \all \z(\int_\R v_t(x) dx\y)^{p_{k+1}}<\infty
\end{align*}
Hence, if \eqref{vpnorm} holds, by \cite[Theorem 5.1]{krylov1999analytic}, we obtain  
$$\E\int_0^T\|v_t\|_{p_{k+1}}^{p_{k+1}}dt\leq \|v\|_{\mathbb{L}^{p_{k+1}}(T)}^{p_{k+1}}<\infty$$
Hence, for any $p\geq2$, inequality \eqref{vpnorm} holds. 
Like the argument in the Lemma 1.5 of \cite{krylov1997result}, choose $1/2<\delta<1$, for any $p\geq 2$, 
\begin{align*}
\left(\int_{\R} |R_{\delta+1}*\kappa_t(x)|^pdx \right)^{1/p}&\leq C \left(\int_{\R} |R_{\delta+1}'*(\phi'u_t)(x)|^pdx\right)^{1/p}+ C\left(\int_{\R} R_{\delta+1}*(\phi''u_t)(x)dx\right)^{1/p}\\
&\leq C (\|R'_{\delta+1}\|_{p}+\|R_{\delta+1}\|_p) \int_{[0,1]\cup[n-1,n]}u_t(x)dx
\end{align*}
Hence 
\begin{align*}
\all \int_\R |R_{\delta+1}*\kappa_t(x)|^pdx &\leq C \all \left(\int_{[0,1]\cup[n-1,n]} u_t(x)dx\right)^p<\infty
\end{align*}

\begin{align*}
\|(1-\Delta)^{-\delta/2}\phi^{1-\gamma}v^\gamma\varphi_k\|_{\mathbb{L}^{p}(l^2)}^{p}&\leq C \all \int_\R dx\z[\int_\R R_\delta^2(x-y)v_t^{2\gamma}(y)dy\y]^{p/2}\\
&\leq C \|R_\delta\|_2^{p/2}\all\int_\R v_t^{\gamma p}(x)dx<\infty
\end{align*}

\begin{align*}
\|(1-\Delta)^{-\delta/2}vh_k\|_{\mathbb{L}^p (l^2)}^{p}&\leq C \all \int_\R dx \z\{\sum_k \z[\int_\R R_\delta(x-y)v_th_k(y)\y]^2\y\}^{p/2}\\
&\leq C \all \int_\R dx \z[\int_\R R_\delta(x-y)v_t(y)dy\y]^{p}\\
&\leq C \|R_\delta\|_1^p \all \|v_t\|_p^p <\infty
\end{align*}
By \cite[Theorem 5.1]{krylov1999analytic}, we have $v\in \mathcal{H}_p^{1-\delta}$.  By choosing for instance $\delta=0.6$, $p=33$, we have $v\in \mathcal{H}_p^{1-\delta}(T)$, and by Sobolev's embedding theorem, $C^{1/10}([0,T]\times \R)\subset \mathcal{H}_p^{1-\delta}(T)$ we get  $$\|u\zeta\|_{C^{1/10}([0,T]\times\R)}<\infty\,\,\,\,a.s.$$
Now we prove \eqref{Int xu}. 
Choose $\eta_n(x)\in C_c(\R)$ and $supp\,\eta_n\in \R_+$, $\eta_n(x)=x$ when $x\in [1,n]$ and $\sup_{x,n}|\eta''_n(x)|<\infty$. 
 \begin{align*}
 0\leq\int_1^n xu_t(x)dx \leq& \int_{\R} \eta_n(x)u_t(x)dx\\
 =&\int_0^t \int_{\R}u_s(x)\eta''_n(x)dxds+M^n_t
 \end{align*}
 \begin{align*}
 [M^n]_t=&\int_0^t u^{2\gamma}_s(x)\eta^2_n(x)dxds+\sum_k \int_0^t ds \left(\int_{\R} u_s(x)h_k(x)\eta_n(x)dx\right)^2\\
 \leq& C \int_0^t\left(\int_\R u_s(x)dx\right)^2ds+\int_0^t\int_\R u^2_s(x) dxds\in L^1(\Pro)
 \end{align*}
 Hence $M^n_t$ is a martingale, taking expectation and let $n\rightarrow \infty$, we get
 $$\E \sup_{t\leq T} \int_0^\infty  xu(t,x)dx\leq C \E\sup_{t\leq T} \int_0^\infty u_t(x)dx<\infty $$
\end{proof}

The proof of Theorem \ref{Compact} follows the idea of \cite{krylov1997result}, we present here for reader's convenience. 
\begin{proof}
[Proof Of Main Theorem]
We follow the proof in \cite{krylov1997result}. 

\emph{Step1.} For $\psi\in C_c (\R)$, if $\psi''=\nu$ is a finite measure on $\R$, then equation \eqref{eq2} also holds. (see \cite[Lemma 3.1]{krylov1997result})
 
\emph{Step 2.} On the set $\{\omega : \int_0^T u_s(0,\omega)ds=0\}$, $u(t,x)=0$ $\forall x>0, t\in[0,T]$.

Let $\psi_n=x_{+}\phi(x/n)$,
\begin{align}\label{Mn}
0\leq \int_0^\infty \psi_n(x)u_t(x)dx=\int_0^t u_s(0)ds+\int_0^t \int_0^\infty u_s(x)\psi''_n(x)dxds+M^n_t
\end{align}
where $M^n_t$ is a local martingale with 
$$[M^n]_t=\int_0^t \int_0^\infty \psi_n^2(x)u^{2\gamma}_s(x)dxds+\sum_k \int_0^t ds \left(\int_\R\psi_n(x)h_k(x)u_s(x)dx\right)^2\in L^1(\Pro).$$ 
Hence $M^n_t$ is a martingale, so $\E|M^n_\tau|=2\E(M_\tau^n)^-$ for any bounded stopping time $\tau$. Let $$V^n_t=\int_0^t u_s(0)ds+\int_0^t \int_0^\infty u_s(x)\psi''_n(x)dxds.$$
Using \eqref{Mn}, 
$$(M^n_t)^-\leq V^n_t\leq C \z\{\int_0^tu_s(0)ds+\frac{1}{n}\int_0^t\int_0^\infty u_s(x)dxds+\frac{1}{n^2}\int_0^t\int_0^\infty xu_s(x)dxds\y\}$$ 
Hence for any bounded stopping time $\tau$, 
$$\E|M^n_\tau|=2\E(M_\tau^n)^-\leq 2\E V^n_\tau$$
By the generalized Ito's inequality, we get for any $0<\alpha<1$ and any bounded stopping time $\tau$, 
\begin{align*}
\E \left(\int_0^\tau \int_0^\infty \psi_n^2(x)u^{2\gamma}_s(x)dxds\right)^{\alpha/2}\leq& \E[M^n]_\tau^{\alpha/2} \leq C \E (\sup_{s\leq \tau}|M_s^n|)^\alpha\\
\leq& C \E (V^n_\tau)^\alpha\leq C\E V^n_\tau\\
\leq &C\Big\{ \E\int_0^\tau u_s(0)ds+ \frac{1}{n} \E \int_0^\tau\int_0^\infty u_s(x)dxds\\
&+\frac{1}{n^2}\E\int_0^\tau\int_0^\infty xu_s(x)dxds\Big\}
\end{align*}
Let $n\rightarrow \infty$, we get 
$$\E \left(\int_0^\tau \int_0^\infty x^2(x)u^{2\gamma}_s(x)dxds\right)^{\alpha/2}\leq C  \E\int_0^\tau u_s(0)ds$$
Now let $\tau=\inf\{t: \int_0^t u_s(0)ds>0\}$, we obtain the conclusion.

\emph{Step 3.}Following the proof of Lemma 2.1 of \cite{krylov1997result} one can show: If $\gamma\in [\frac{1}{2},1)$, then for any $p,q>0$, $0<r\leq 1$, $0<\alpha<1$ there exists a point $x\in [r,2r]$ such that
$$\Pro\z(\int_0^T u_s(x)ds\geq p\y)\leq \Pro\z(\int_0^T u_s(0)ds\geq q\y)+Cr^{-3\alpha/2}\z(\frac{q}{p^\gamma}\y)^{\alpha}$$
and here $C$ is independent with $p,q,r$. 

\emph{Step 4.} Now the Theorem can be prove just as Theorem 1.7 of \cite{krylov1997result}.

\end{proof}

\bibliographystyle{plain}

\bigskip
{\bf Guohuan Zhao}

School of Mathematical Sciences, Peking
University,

Beijing, 100871, P.R. China.

Email: zhaogh@pku.edu.cn

\end{document}